\newtheorem{theorem}{Theorem}
\newtheorem{corollary}{Corollary}[theorem]
\newtheorem{proposition}{Proposition}[theorem]
\newtheorem{lemma}{Lemma}[theorem]
\newtheorem{definition}{Definition}[theorem]
\newcommand{\ubar}[1]{\underaccent{\bar}{#1}}
\newcommand*{\probability}{\mathbb{P}}
\newcommand*{\ev}{\mathbb{E}}
\newcommand*{\compon}{k}
\newcommand*{\scen}{s}
\newcommand*{\alloclevel}{\ell}
\newcommand*{\componoutcome}{\kappa}
\newcommand*{\cell}{\lambda}
\newcommand*{\cellalt}{\cell'}
\newcommand*{\leaf}{\lambda}
\newcommand*{\secondstagecost}{q}
\newcommand*{\secondstagecostvec}{\textbf{\secondstagecost}}
\newcommand*{\capac}[1][]{u_{#1}}
\newcommand*{\numcompon}{n}
\newcommand*{\numalloclevels}{L}
\newcommand*{\numscens}{S}
\newcommand*{\numfirststagevars}{n_0}
\newcommand*{\numsecondstagevars}{n_2}
\newcommand*{\numsecondstagemainconstrs}{m}
\newcommand*{\numcells}{\Lambda}
\newcommand*{\firststagevar}{x}
\newcommand*{\firststagevec}{\textbf{\firststagevar}}
\newcommand*{\firststagevecfix}{\hat{\textbf{\firststagevar}}}
\newcommand*{\firststageallocvar}{x}
\newcommand*{\firststagealloc}[1][]{\firststageallocvar_{#1}}
\newcommand*{\firststageallocil}{\firststagealloc[\compon\alloclevel]}
\newcommand*{\firststageallocvec}{\textbf{\firststageallocvar}}
\newcommand*{\firststageallocvecfix}{\hat{\textbf{\firststageallocvar}}}
\newcommand*{\secondstagevar}{y}
\newcommand*{\secondstagevec}{\textbf{\secondstagevar}}
\newcommand*{\auxvar}{\theta}
\newcommand*{\objvar}{z}
\newcommand*{\objvarlb}{\ubar{\objvar}}
\newcommand*{\capdualvar}{\beta}
\newcommand*{\randsym}{\xi}
\newcommand*{\randvark}[1]{\randsym_{#1}}
\newcommand*{\randvec}{\boldsymbol{\randsym}}
\newcommand*{\randvecsupport}{\Xi}
\newcommand*{\partition}{\Phi}
\newcommand*{\partitioncellvar}{\randvecsupport}
\newcommand*{\partitioncell}[1][]{\partitioncellvar^{#1}}
\newcommand*{\partitioncellk}{\partitioncell[\cell]}
\newcommand*{\evcell}[1]{\bar{#1}^{\partitioncell}}
\newcommand*{\evcellindex}[1]{\bar{#1}^{\partitioncell[\cell]}}
\newcommand*{\randvarevfn}[1]{\bar{\randsym}_\compon(#1)}
\newcommand*{\randvecevfn}[1]{\bar{\randvec}(#1)}
\newcommand*{\randvecevcellfn}[1]{\evcell{\randvec}(#1)}
\newcommand*{\randvecevcellfnindex}[1]{\evcellindex{\randvec}(#1)}
\newcommand*{\randvecevcell}{\randvecevcellfn{\uncertaintydrivervec}}
\newcommand*{\recourseevcellfn}[1]{\overline{\recfn(\randvec)}^{\partitioncell}(#1)}
\newcommand*{\treenode}{\cell}
\newcommand*{\recfn}{Q}
\newcommand*{\recfnpenalty}{\tilde{\recfn}}
\newcommand*{\exprecfn}{\mathcal{\recfn}}
\newcommand*{\probfnvar}{\probability}
\newcommand*{\condprobfn}[2]{\probfnvar[#1\,|\,#2]}
\newcommand*{\stateprobfn}[2]{f_{#1}(#2)}
\newcommand*{\stateprobfndiscri}{\stateprobfn{\compon}{\staterealizi;\alloclevel}}
\newcommand*{\stateprobfndiscris}{\stateprobfn{\compon}{\staterealizis;\alloclevel}}
\newcommand*{\evfnrv}[2]{\ev_{#1}[#2]}
\newcommand*{\condEV}[2]{\ev[#1\,|\,#2]}
\newcommand*{\probcellvar}{\probability^{\partitioncell}}
\newcommand*{\probcellvarindex}{\probability^{\partitioncell[\cell]}}
\newcommand*{\probfncell}[1]{\probcellvar[#1]}
\newcommand*{\probfncellindex}[1]{\probcellvarindex[#1]}
\newcommand*{\condprobfncell}[2]{\probcellvar[#1\,|\,#2]}
\newcommand*{\firststagefeasregion}{X}
\newcommand*{\firststageallocfeasregion}{X}
\newcommand*{\recoursefeasregion}{Y}
\newcommand*{\components}{I}
\newcommand*{\fixedcomponents}{F}
\newcommand*{\descendentnodes}{\mathcal{D}}
\newcommand*{\parttreenodes}{\mathcal{N}}
\newcommand*{\parttreeleafnodes}{\mathcal{L}}
\newcommand*{\arcs}{A}
\newcommand*{\alloclevelssum}{\sum_{\alloclevel=0}^\numalloclevels}
\newcommand*{\probprod}{\prod_{\compon=1}^\numcompon}
\newcommand*{\scenssum}{\sum_{s=1}^\numscens}
\newcommand*{\cellssum}{\sum_{\partitioncell\in\partition}}
\newcommand*{\constrdualvar}{\alpha}
\newcommand*{\constrdual}[1][]{\constrdualvar_{#1}}
\newcommand*{\constrdualvec}{\boldsymbol{\constrdual}}
\newcommand*{\jointsupport}{\Delta}
\renewcommand*{\partitioncellvar}{\jointsupport}
\newcommand*{\condtreeprob}[1]{\probability^{\treenode',\treenode}[#1]}
\newcommand*{\attackalloclevel}{\ell'}
\newcommand*{\attackplan}{\kappa}
\newcommand*{\secondstagerhsfixed}{b}
\newcommand*{\secondstagerhsfixedvec}{\textbf{\secondstagerhsfixed}}
\newcommand*{\secondstagerandvarcoef}{d}
\newcommand*{\secondstagerandvarcoefvec}{\textbf{\secondstagerandvarcoef}}
\newcommand*{\numattackalloclevels}{L'}
\newcommand*{\numattackplans}{\bar{\attackplan}}
\newcommand*{\numattackvars}{n_1}
\newcommand*{\firststageallocfixed}[1][]{\hat{\firststageallocvar}_{#1}}
\newcommand*{\firststageallocfixedil}{\firststageallocfixed[\compon\alloclevel]}
\newcommand*{\attackvar}{v}
\newcommand*{\attackvec}{\textbf{\attackvar}}
\newcommand*{\attackvecfix}{\hat{\textbf{\attackvar}}}
\newcommand*{\attackallocvar}{v}
\newcommand*{\attackalloc}[1][]{\attackallocvar_{#1}}
\newcommand*{\attackallocfixed}[1][]{\hat{\attackallocvar}_{#1}}
\newcommand*{\attackallocil}{\attackalloc[\compon\attackalloclevel]}
\newcommand*{\attackallocfixedil}{\attackallocfixed[\compon\attackalloclevel]}
\newcommand*{\constrdualvarub}{\beta}
\newcommand*{\constrdualub}[1][]{\constrdualvarub_{#1}}
\newcommand*{\constrdualubvec}{\boldsymbol{\constrdualub}}
\newcommand*{\attackobjvar}{\zeta}
\newcommand*{\attackobjvarlb}{\ubar{\attackobjvar}}
\renewcommand*{\stateprobfndiscri}{\stateprobfn{\compon}{\tilde{\randsym}_\compon;\alloclevel,\attackalloclevel}}
\renewcommand*{\stateprobfndiscris}{\stateprobfn{\compon}{\tilde{\randsym}_\compon^\scen;\alloclevel,\attackalloclevel}}
\newcommand*{\attackfeasregion}{V}
\newcommand*{\attackplansset}{\bar{\attackfeasregion}}
\renewcommand*{\randvecevcell}{\randvecevcellfn{\firststagevec,\attackvec}}
\newcommand*{\recourseevcell}{\recourseevcellfn{\firststagevec,\attackvec}}
\newcommand*{\secondstagematrixfixed}{A} 
\newcommand*{\attackplansinlist}{\attackplan=1,\dots,\numattackplans}
\newcommand*{\attackalloclevelssum}{\sum_{\attackalloclevel=0}^{\numattackalloclevels}}
\title{A Successive Refinement Algorithm for Tri-Level Stochastic Defender-Attacker Problems with Decision-Dependent Probability Distributions
\thanks{\textit{\underline{Citation}}: 
\textbf{Authors. Title. Pages.... DOI:000000/11111.}} 
}
\author{
  Samuel Affar, Hugh Medal \\
  Department of Industrial Engineering\\
  University of Tennessee \\
  Knoxville\\
  \texttt{saffar@vols.utk.edu, hmedal@utk.edu} \\
}
\begin{document}
\maketitle

\begin{abstract}
Tri-level defender-attacker game models are a well-studied method for determining how best to protect a system (e.g., a transportation network) from attacks. Existing models assume that defender and attacker actions have a perfect effect, i.e., system components hardened by a defender cannot be destroyed by the attacker, and attacked components always fail. Because of these assumptions, these models produce solutions in which defended components are never attacked, a result that may not be realistic in some contexts. 
This paper considers an imperfect defender-attacker problem in which defender decisions (e.g., hardening) and attacker decisions (e.g., interdiction) have an imperfect effect such that the probability distribution of a component's capacity depends on the amount of defense and attack resource allocated to the component. Thus, this problem is a stochastic optimization problem with decision-dependent probabilities and is challenging to solve because the deterministic equivalent formulation has many high-degree multilinear terms. To address the challenges in solving this problem, we propose a successive refinement algorithm that dynamically refines the support of the random variables as needed, leveraging the fact that a less-refined support has fewer scenarios and multilinear terms and is, therefore, easier to solve. A comparison of the successive refinement algorithm versus the deterministic equivalent formulation on a tri-level stochastic maximum flow problem indicates that the proposed method solves many more problem instances and is up to $66$ times faster. 
These results indicate that it is now possible to solve tri-level problems with imperfect hardening and attacks.
\end{abstract}

\keywords{defender-attacker problems \and interdiction \and stochastic programming \and decision-dependent uncertainty \and refinement}

\section{Introduction}
Many decisions in defense and homeland security involve \textit{adversarial resource allocation} in which an agent must determine how to allocate resources given that an adversary observes the allocation and allocates their own resources accordingly. To address this important class of problems, there is a mature body of literature on \textit{network interdiction}, which models a two-player game between an \textit{interdictor} that interdicts components of a network and an \textit{operator} that utilizes the remaining components to optimize some objective. From this literature emerged another body of literature on hardening (a.k.a., fortification) problems in which a \textit{defender}, acting prior to the interdictor, hardens some of the network components, making them immune to interdiction.

Most hardening problems assume \textit{perfect} hardening and interdiction such that a hardened component cannot fail and an interdicted component is guaranteed to fail. For many applications this is a reasonable assumption. For instance, when the effect of hardening is strong, the interdictor is not likely to attempt to attack a hardened component because they can do better by attacking unhardened components.

However, it is not difficult to identify applications of hardening models for which hardening and interdiction are not perfect. A prominent example is a conflict between two adversaries in which one allocates combat units to arcs in a network maximize the flow on a network and another allocates units in order to minimize flow. In this context, it is usually not reasonable to assume that a defended arc is impenetrable. Rather, it is more reasonable to assume that the outcome of a particular battle at an arc may depend on how many units are allocated to that arc. For applications such as this with \textit{imperfect} hardening and interdiction, an important question is: \textit{do existing hardening models (which assume perfect hardening and interdiction) still provide good-quality solutions to problems with imperfect hardening and interdiction}? The following example indicates that existing models can output poor solutions for these problems.

\subsection{Example}
Consider the problem of hardening arcs in a network (shown in Figure \ref{fig:hardening-determ}) to mitigate the effects of interdiction and maximize post-interdiction maximum flow. The defender and attacker have a budget of $6$ to allocate among arcs in the network; each arc can be allocated $0$, $1$, or $2$ hardening (attack) resources.

A standard deterministic tri-level model was used to obtain an approximate solution to the problem. This standard model assumes \textit{binary} hardening and interdiction, which translates to each arc receiving either $0$ or $2$ hardening (interdiction) resources; both the defender and attacker have a budget of 6 units. This model also assumes \textit{perfect} hardening, i.e., a hardened arc cannot fail, and an interdicted arc fails with probability $1$. In this solution (shown in Figure \ref{fig:hardening-determ}), arcs $(2,3)$, $(7,8)$ and $(8,9)$ are hardened (with 2 units). Assuming that hardening and attacking arcs have a perfect effect, a best response for the attacker is to remove arcs $(1,2)$, $(2,5)$, and $(4,5)$, resulting in a maximum flow of $30$. However, under imperfect hardening and interdiction, this solution may not be optimal.  

Using the defender's solution $(2,3)$, $(7,8)$ and $(8,9)$, we solve an interdiction problem that assumes imperfect attack, i.e., the probability that an arc fails depends on the number of defender and attacker units allocated to the arc. In the solution from this approach, the attacker's optimal allocation is more dispersed, allocating one unit to arcs $(1,2)$, $(4,5)$, $(7,8)$, and $(8,5)$ and two units to arc $(8,9)$. As a result, the unhardened arcs fail with probability $1$, arc $(7,8)$ has a $1/3$ chance of failure, and arc $(8,9)$ a $1/2$ chance, resulting in an expected post-interdiction maximum flow of $10$, which is much less than predicted by the deterministic model. Thus, we observe that the deterministic model's inability to account for imperfect hardening and interdiction yields a hardening solution with a concentrated resource allocation, which leaves arcs $(1,2)$, $(4,5)$, and $(8,5)$ unprotected.

Figure \ref{fig:hardening-multAlloc} shows the hardening solution obtained by solving the true problem that considers imperfect hardening and attacks. In this solution, the hardening allocation is more dispersed, with one unit allocated to arcs $(1,2)$, $(4,7)$, and $(2,3)$ and $(7,8)$ and two units allocated to $(8,9)$, leaving fewer arcs unprotected with no hardening resources. In response to this more dispersed hardening plan, the attacker's solution (obtained by solving an interdiction model that assumes imperfect attacks) allocates one unit to arcs ($5,6)$ and ($7,8)$ and two units to $(2,3)$ and $(8,9)$, resulting in an expected post-interdiction maximum flow of $59/3\approx 19.66$, an increase of about 96\% over the solution produced by the standard model. 

As this example illustrates, when hardening and interdiction are not perfect, it may be unwise to assume that it is. However, as shown in \S\ref{sec:background} the models and algorithms that currently exist in the literature are not well-suited for computing a solution that performs well under imperfect hardening and interdiction. Thus, goal of this paper is to identify a tractable approach to solving tri-level problems with imperfect hardening and attacks, enabling the computation of better solutions.

\begin{figure}[ht]
     \centering
         \centering
         \includegraphics[width=0.5\textwidth]{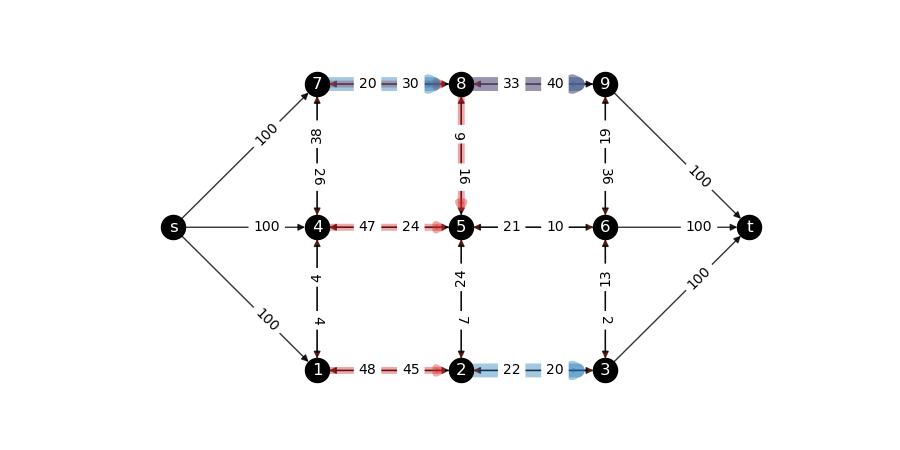}
         \caption{\textbf{Hardening solution obtained by deterministic tri-level model}. A standard deterministic tri-level model was used to (heuristically) obtain a strategy for allocating hardening units to arcs. This strategy yields an expected post-interdiction maximum flow of $10$ when arcs fail probabilistically.}
         \label{fig:hardening-determ}
\end{figure}

\begin{figure}[ht]
         \centering
         \includegraphics[width=0.5\textwidth]{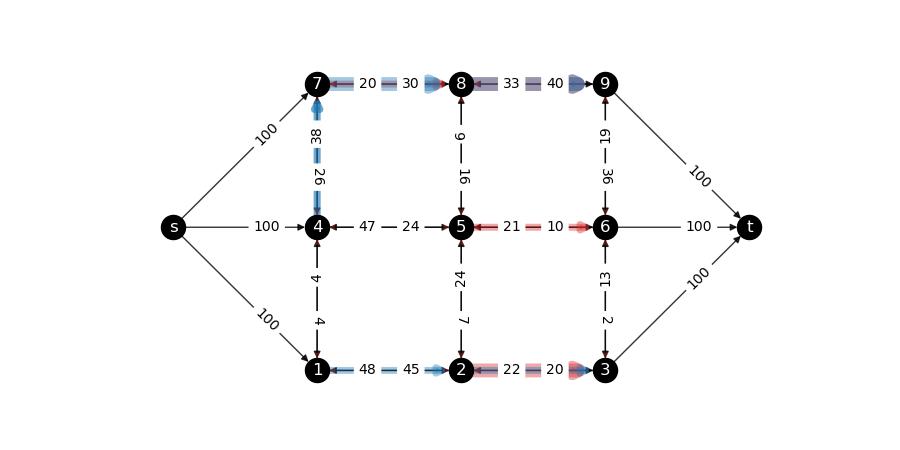}
         
    \caption{\textbf{Solution obtained by model with imperfect hardening and interdiction}. Under this model the hardening solution is more dispersed, spanning $5$ arcs, which results in a different attacker solution, resulting in an expected post-interdiction maximum flow of $59/3\approx 19.66$.}
         \label{fig:hardening-multAlloc}
    \label{fig:illustration}
\end{figure}

\subsection{Contributions and Findings}
This paper contributes the following to the literature.
First, we identify \textit{lower and upper bound properties} inherent in a class of tri-level stochastic defender-attacker-operator problems with decision-dependent probabilities (\S\ref{sec:struct-properties}). Second, we leverage these properties to develop a \textit{stochastic refinement framework} for tri-level stochastic defender-attacker-operator with decision-dependent probabilities (\S\ref{sec:methods}).

Third, we then \textit{apply this framework to a class of tri-level stochastic defender-attacker problems with decision-dependent capacity distributions}, assuming that the random variables are discrete (with finite support) and statistically independent. Specifically, we reformulate the problem using duality, and describe an approach that \textit{dynamically refines the support of the random variables within a branch-and-bound algorithm} (\S\ref{sec:application}). This algorithm is demonstrated on a \textit{numerical example} of a tri-level defender-attacker maximum flow problem in which the defender and attacker allocate resources to arcs in order to maximize (minimize) the resulting expected maximum flow (\S\ref{sec:example}).

Fourth, we test the algorithm on a set of instances of the same tri-level defender-attacker maximum expected flow problem, comparing it with a deterministic equivalent formulation. Results indicate that \textit{the SRA algorithm is able to solve many more instances of the problem, especially larger ones}. Of the instances solved by both methods, \textit{the SRA algorithm is up to 66 times faster} (\S\ref{sec:results}).

\subsection{Outline of the paper}
In the remainder of this article, \S\ref{sec:background} discusses previous studies that are relevant to this work. \S\ref{sec:problem} describes the tri-level defender-attacker-operator problem studied in this paper. \S\ref{sec:struct-properties} discusses several structural properties that exist in tri-level defender-attacker-operator problems with decision-dependent probabilities and \S\ref{sec:methods} leverages these properties in developing the successive refinement framework for defender-attacker problems with decision-dependent uncertainty. \S\ref{sec:application} then applies this framework to a class of defender-attacker interdiction problems with fortification and decision-dependent probabilities in which the random variables are discrete (with finite support), and mutually statistically independent. \S\ref{sec:example} demonstrates the algorithm on a tri-level maximum flow interdiction problem with decision-dependent capacity distributions.
\S\ref{sec:results} presents results that evaluate the scalability of the successive refinement algorithm, and \S\ref{sec:conclusion} concludes the article.

\section{Background}\label{sec:background}

The literature on network interdiction dates at least to \cite{mcmasters1970optimal}, and includes the notable paper on the maximum flow network interdiction problem by \cite{Wood.1993}, which led to many extensions. \cite{Cormican.1998} studied a stochastic network interdiction problem in which an attacked arc is completely destroyed according to some fixed probability. A extension is also discussed in which multiple attacks can be made on an arc, each successful with a fixed probability. This extension is similar to the interdiction problem we consider in this paper except that we assume a more general relationship between the number of attacks and probability that the arc is destroyed. 
\cite{Losada.2012} studied a multi-level uncapacitated facility interdiction problem in which the attacker chooses the interdiction level for each facility, with higher levels having a higher chance of success. 

A related concept to imperfect interdiction is \textit{partial} interdiction. \cite{aksen2014bilevel} integrated partial facility interdiction decisions into a median-type network interdiction problem with capacitated facilities and outsourcing option, representing the problem as a bi-level defender-attacker model. In this model, the interdictor can destroy any portion of the facility's capacities as far as it is within the budget. This study proposed two solution techniques: a progressive grid search and a multi-start simplex search heuristics. \cite{fard2018bi} developed two meta-heuristics to solve an extension of the problem where the defender aims to minimize two objective functions simultaneously. 
Some studies have considered \textit{partial} hardening. \cite{hien2020mitigating} developed a stochastic defender-attacker-defender (DAD) sequential game model to optimally allocate defensive resources to a transportation network where the operator hardens the network by selecting arcs to partially or entirely protect their nominal capacities or by reinforcing them with indestructible secondary capacities. 

There are only a few papers that have considered hardening problems in which interdictions are probabilistically successful. \cite{Zhu.2013} and \cite{Li.2021p8} studied a problem in which an attack on a facility is probabilistically successful, dependent on the number of defense units assigned to the facility, using heuristic approaches to solve the problems. However, there are currently no exact methods for hardening problems with imperfect hardening and interdiction. 

While the hardening literature generally does not consider an imperfect effect of allocating defense/attack resources, in the literature on \textit{security games} this is a basic assumption. Generally speaking, this literature focuses on simultaneous games and seeks to identify closed-form solutions for equilibrium decisions. \cite{hunt2024review} offers a review of Stackelberg security games, highlighting that most studies assume imperfect hardening and interdiction. \cite{xu2016modeling} 
studied the interaction between the government's defense systems and terrorist attacks using defense-attack games with imperfect hardening and attack measures. This body of literature differs from the present study in that security games generally do not include a recourse problem, and are solved analytically.


The imperfect hardening problem can be described as a stochastic optimization problem with \textit{decision-dependent uncertainty} (DDU) due to the fact that the uncertainty in the problem (i.e., which attacks are successful) depends on the decisions (i.e., how to allocate hardening and interdiction resources). Stochastic optimization with DDU is an active area of research and currently most solution approaches are problem-specific. Research on stochastic optimization with DDU can be categorized based on how decisions influence uncertainty, as outlined by \cite{Goel.Grossmann.2006}. Decisions can impact the timing of uncertainty realization \citep{Goel.Mulkay.2006}, the uncertainty sets in robust and distributionally-robust optimization models \citep{Luo.Mehrotra.2020}, and the probability distribution of random variables \citep{Liu.Sen.2022}. Our study falls into the third category, the least explored in the literature, in which decisions influence the probabilities of random variables, i.e., decision-dependent probabilities. 

Several approaches have been developed to address decision-dependent probabilities. One method is to decouple the decision dependence in the objective before solving the model. This approach involves applying decoupling techniques to avoid non-convexity and using a sampling technique or a heuristic to solve the resulting model, which usually has a larger sample space. This method is common in power grid planning and distribution \citep{Ma.Guo.2017}.
 The drawback is the increased computational complexity due to the larger scenario set. Another technique involves discretizing the first-stage variables and linearizing the multilinear terms using a probability chain. This method, used in \cite{Losada.2012,OHanley.2013}, and \cite{Medal.2015} simplifies the model but increases the size of the resulting mixed-integer programs (MIPs), hindering scalability. 

Some studies use approximations to resolve decision dependence. For instance, the Point Estimate Method (PEM) \cite{Yin.Hou.2023} reduces the scenario sample space by using special point concentrations instead of the entire probability density function, approximating the expectation of the second-stage objective. \cite{Peeta.Viswanath.2010} approximated the objective function using a Taylor series expansion to resolve the non-convexity. While these methods reduce computational complexity, they may sacrifice solution accuracy. Additionally, heuristic approaches are employed to find locally optimal solutions. For example, \cite{karaesmen2004overbooking} used a stochastic gradient descent method for the stochastic overbooking problem, and \cite{Held.Woodruff.2005} and \cite{Du.Peeta.2014} used iterative heuristic algorithms with sampling for multi-stage stochastic network interdiction problems. These heuristic methods are less computationally intensive but do not guarantee globally optimal solutions.

\cite{medalaffar24} developed an algorithm based on the successive refinement framework that provides an exact solution to a specific class of stochastic programs with decision-dependent uncertainty, specifically stochastic programs with decision-dependent random capacities. This class of problems considered in \cite{medalaffar24} are single-stage problems (and two-stage problems that can be combined into a single stage) which differ from the tri-level defender-attacker problem studied in this paper.

\section{Problem Description}\label{sec:problem}
In this paper we seek to solve the following tri-level defender-attacker-operator formulation:
\begin{mini}|s|
    {\firststagevec\in \firststagefeasregion}{\max_{\attackvec\in \attackfeasregion}\quad  \exprecfn(\firststagevec,\attackvec),}{\label{mod:dad-exp-recourse-fn}}{\objvar^*=}
\end{mini}

\noindent where
$\firststagevec$ is a vector of first-level defense decisions made by a \textit{defender}. After observing the decision made by the defender, an \textit{attacker} makes the second-level decision represented by the vector $\attackvec$. The primary motivation for this paper is problems in which $\firststagevec$ the allocation of defense resources allocated in order to defend against an attack, and $\attackvec$ represents the allocation of attack resources intended to destroy components of a system. 
The vectors have feasible region $\firststagefeasregion\subseteq\mathbb{R}^{\numfirststagevars}$ and $\attackfeasregion\subseteq\mathbb{R}^{\numattackvars}$, respectively, each of which incorporate constraints such as resource scarcity.
The function $\exprecfn(\firststagevec,\attackvec)=\evfnrv{\randvec\,|\,\firststagevec,\attackvec}{\recfn(\randvec)}$ is the expected recourse function, with the expectation taken over the random vector $\randvec$ (with support $\randvecsupport\subseteq\mathbb{R}^\numcompon$) that is conditional on the defender and attacker decisions $\firststagevec$ and $\attackvec$, implying that this is a \textit{stochastic program with decision-dependent probabilities}. After the random vector $\randvec$ is realized, the recourse function $\recfn$ computes the objective value for the third-level operator's problem:

\begin{mini!}|s|
    {\secondstagevec\in \recoursefeasregion}{\secondstagecostvec^T\secondstagevec \label{eq:obj-recourse}}{\label{mod:recourse}}{\recfn(\randvec) =}
    \addConstraint{\secondstagematrixfixed\secondstagevec}{= \secondstagerhsfixedvec}{\quad [\constrdualvec]\label{constr:fixed-constrs}}
    \addConstraint{\secondstagevec}{\leq \secondstagerandvarcoefvec^T\randvec}{\quad [\constrdualubvec],\label{constr:var-ub}}
\end{mini!}

\noindent where $\secondstagecostvec\in\mathbb{R}^{\numsecondstagevars}$, $\secondstagerhsfixedvec\in\mathbb{R}^{\numsecondstagemainconstrs}$, $\secondstagerandvarcoefvec\in\mathbb{R}^{\numcompon}$, $\secondstagevec\in\mathbb{R}^{\numsecondstagevars}$ is a vector of second-stage decisions with domain $\recoursefeasregion$, and $\secondstagematrixfixed\in\mathbb{R}^{\numsecondstagemainconstrs\times\numsecondstagevars}$. 
The vectors $\constrdualvec\in\mathbb{R}^{\numsecondstagemainconstrs}$ and $\constrdualubvec\in\mathbb{R}^{\numcompon}$ represent vectors of dual multipliers, which will used as we describe the solution method.
Formulation \eqref{mod:recourse} is a fairly general linear program and can represent many commonly-studied operator problems in tri-level defender-attacker formulations such as the shortest path problem, the maximum flow problem, the minimum cost flow problem, and transportation problems.

\section{Structural Properties}\label{sec:struct-properties}
In this section, we illustrate several structural properties that exist in problem \eqref{mod:dad-exp-recourse-fn}. Specifically, we extend the results provided in \cite{medalaffar24} to the tri-level defender-attacker setting.

Given that $\recfn$ is convex over $\randvecsupport$ (from classic linear programming theory), for a fixed $\attackvecfix$ and $\firststagevecfix$, a \textit{mean value recourse problem} gives a lower bound on the true objective value due to Jensen's inequality, i.e., 
$
    \recfn(\bar{\randvec}(\firststagevecfix, \attackvecfix)) \leq \ev[\recfn(\randvec)|\firststagevecfix, \attackvecfix],
$
where $\randvecevfn{\firststagevecfix,\attackvecfix} = \ev[\randvec|\firststagevecfix, \attackvecfix] = (\randvarevfn{\firststagevecfix,\attackvecfix})_{\compon=1}^\numcompon$, $\randvarevfn{\firststagevecfix,\attackvecfix} = \ev[\randvark{\compon}|\firststagevecfix, \attackvecfix]$, and $\ev[\recfn(\randvec)|\firststagevecfix, \attackvecfix] = \sum_{\randvec \in \randvecsupport} \probability[\randvec|\firststagevecfix, \attackvecfix 
]\,\recfn(\randvec)$. 

Following \cite{Cormican.1998}, \cite{medalaffar24} showed that this lower bound  can be extended into a partition of the support $\randvecsupport$. Let $\partition=\{\partitioncellk,\dots,\partitioncell[\numcells]\}$ be a \textit{partition} of the support $\randvecsupport$, i.e., $\cup_{\cell=1}^\numcells \partitioncellk = \randvecsupport$ and $\partitioncell[\cellalt]\cap\partitioncellk=\emptyset$ for $\cellalt\neq\cell$. The elements of $\partition$, $\partitioncellk$, are called \textit{cells}. For a cell $\partitioncell$, the conditional expectation of $\randvec$ is $ \randvecevcell:=\condEV{\randvec}{\randvec\in \partitioncell,\firststagevec,\attackvec}$, the conditional expectation of $\recfn(\randvec)$ is $ \recourseevcell:=\condEV{\recfn(\randvec)}{\randvec\in \partitioncell,\firststagevec,\attackvec}$, and the conditional probability is $\probfncell{\firststagevec,\attackvec}:=\condprobfncell{\randvec\in \partitioncell}{\firststagevec,\attackvec}$. A \textit{refinement} of partition $\partition$ is another partition $\partition'$ such that for any cell $\partitioncell[\cell']\in\partition'$, $\partitioncell[\cell']\subseteq\partitioncellk$ for some cell $\partitioncellk\in\partition$ and $\partitioncell[\cell']\subset\partitioncellk$ for at least one cell $\partitioncellk\in\partition$ and $\partitioncell[\cell']\in\partition'$. (In the case that the support $\randvecsupport$ is a function of $(\firststagevec,\attackvec)$, then we substitute a new support $\tilde{\randvecsupport}$ such that $\tilde{\randvecsupport}\supseteq \jointsupport((\firststagevec,\attackvec))$ for all $(\firststagevec,\attackvec)\in (\firststagefeasregion,\attackfeasregion)$, where $(\firststagefeasregion,\attackfeasregion)$ is the joint feasible region.) The following result provides a lower bound on $\exprecfn(\firststagevecfix,\attackvecfix)$ with respect to a partition $\partition$, i.e., $\underline{\exprecfn}(\firststagevecfix,\attackvecfix;\partition)$ and shows that this bound is monotonic with respect to refinement.

The properties we present rely on the fact that partitioning the support of a random variable tighten's Jensen's inequality.

\setcounter{theorem}{0}

\begin{proposition}{Partitioning Tighten's Jensen's Inequality (see \cite{huang1977bounds}).}\label{lem:jensen}

Let $\probability^{\partitioncell}=\probability[\randvec\in \partitioncell]$ and $\bar{\randvec}^{\partitioncell}=\condEV{\randvec}{\randvec\in \partitioncell}$.
For partitions $\partition$ and $\partition'$ of $\randvecsupport$ such that $\partition$ is a refinement of $\partition'$, the following holds for a convex function $\recfn$:
$
\cellssum\probability^{\partitioncell}\, \recfn(\bar{\randvec}^{\partitioncell}) \geq   \sum_{\partitioncell\in\partition'} \probability^{\partitioncell}\, \recfn(\bar{\randvec}^{\partitioncell}).
$
Also, the following holds for a concave function $\recfnpenalty$:
$
\sum_{\partitioncell\in\partition'}\probability^{\partitioncell}\, \recfnpenalty(\bar{\randvec}^{\partitioncell}) \leq   \sum_{\partitioncell\in\partition} \probability^{\partitioncell}\, \recfnpenalty(\bar{\randvec}^{\partitioncell}).
$

\end{proposition}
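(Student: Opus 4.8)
The plan is to reduce the global inequality to a family of \emph{local} Jensen inequalities, one per cell of the coarser partition, exploiting the refinement structure. Since $\partition$ refines $\partition'$, each coarse cell $\partitioncell[\cellalt]\in\partition'$ is the disjoint union of precisely those fine cells $\partitioncellk\in\partition$ with $\partitioncellk\subseteq\partitioncell[\cellalt]$. The first step is to record two elementary decomposition identities on each fixed coarse cell: probability additivity, $\probability^{\partitioncell[\cellalt]}=\sum_{\partitioncellk\subseteq\partitioncell[\cellalt]}\probability^{\partitioncellk}$, and the law of total expectation, which writes the coarse conditional mean as a convex combination of the fine conditional means, $\bar{\randvec}^{\partitioncell[\cellalt]}=\sum_{\partitioncellk\subseteq\partitioncell[\cellalt]}\bigl(\probability^{\partitioncellk}/\probability^{\partitioncell[\cellalt]}\bigr)\bar{\randvec}^{\partitioncellk}$. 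The coefficients $\probability^{\partitioncellk}/\probability^{\partitioncell[\cellalt]}$ are nonnegative and sum to one, so they constitute a genuine convex weighting.

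With these identities in hand, the second step applies Jensen's inequality inside each coarse cell. Because $\recfn$ is convex, $\recfn\bigl(\bar{\randvec}^{\partitioncell[\cellalt]}\bigr)\le\sum_{\partitioncellk\subseteq\partitioncell[\cellalt]}\bigl(\probability^{\partitioncellk}/\probability^{\partitioncell[\cellalt]}\bigr)\recfn\bigl(\bar{\randvec}^{\partitioncellk}\bigr)$; multiplying through by $\probability^{\partitioncell[\cellalt]}$ clears the denominators to give $\probability^{\partitioncell[\cellalt]}\recfn\bigl(\bar{\randvec}^{\partitioncell[\cellalt]}\bigr)\le\sum_{\partitioncellk\subseteq\partitioncell[\cellalt]}\probability^{\partitioncellk}\recfn\bigl(\bar{\randvec}^{\partitioncellk}\bigr)$. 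Summing over all coarse cells $\partitioncell[\cellalt]\in\partition'$, the right-hand side is a double sum (over coarse cells, then over the fine cells nested inside each) that, by the refinement property, collapses to the single sum $\cellssum\probability^{\partitioncell}\recfn(\bar{\randvec}^{\partitioncell})$ taken over all of $\partition$. This is exactly the convex inequality asserted in the statement.

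The concave case then follows with no new work: applying the convex argument above to $-\recfnpenalty$, which is convex, reverses every inequality and immediately yields $\sum_{\partitioncell\in\partition'}\probability^{\partitioncell}\recfnpenalty(\bar{\randvec}^{\partitioncell})\le\cellssum\probability^{\partitioncell}\recfnpenalty(\bar{\randvec}^{\partitioncell})$.

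The argument is short, and the single point needing care is the law-of-total-expectation decomposition of the conditional mean together with the bookkeeping that makes the double sum telescope precisely onto $\partition$; both rest on the refinement property that each coarse cell is a disjoint union of fine cells, so that no fine cell is double-counted and none is omitted. The only edge case is a coarse cell of zero probability, for which the convex-combination weights are undefined: such a cell contributes a zero term on both sides and can be discarded at the outset without affecting either summation. I therefore expect no genuine obstacle beyond this routine verification.
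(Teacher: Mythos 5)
Your proof of the convex inequality is correct, and it is the standard argument: the paper itself gives no proof of this proposition (it defers to the cited reference), and the chain you use --- each coarse cell $\partitioncell[\cellalt]\in\partition'$ is the disjoint union of the fine cells it contains, the law of total expectation writes $\bar{\randvec}^{\partitioncell[\cellalt]}$ as the convex combination $\sum_{\partitioncellk\subseteq\partitioncell[\cellalt]}(\probability^{\partitioncellk}/\probability^{\partitioncell[\cellalt]})\,\bar{\randvec}^{\partitioncellk}$, local Jensen is applied inside each coarse cell, and the double sum collapses over the refinement --- is exactly the argument behind the cited result, with the zero-probability cells correctly set aside.

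The concave half, however, contains a genuine error. Applying your convex inequality to the convex function $-\recfnpenalty$ yields $\cellssum\probability^{\partitioncell}\,\recfnpenalty(\bar{\randvec}^{\partitioncell}) \leq \sum_{\partitioncell\in\partition'}\probability^{\partitioncell}\,\recfnpenalty(\bar{\randvec}^{\partitioncell})$, i.e., the sum over the \emph{finer} partition $\partition$ is the smaller one; your final line asserts the opposite inequality, which does not follow from --- and in fact contradicts --- your own derivation. The inequality you wrote down is the one printed in the paper, but that printed inequality is itself reversed: take $\randvec$ uniform on $\{0,2\}$ and $\recfnpenalty(\randsym)=-\randsym^2$, so the coarse (single-cell) sum is $-1$ while the fully refined sum is $-2$, and $-1\leq -2$ fails. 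The true monotonicity --- refining the partition can only decrease the concave (upper-bound) sum --- is also the property the paper actually relies on later (e.g., ``adding these new constraints yields a new upper bound that is no larger than the previous one''). So your reduction to the convex case via $-\recfnpenalty$ is the right move, but the conclusion should read $\cellssum\probability^{\partitioncell}\,\recfnpenalty(\bar{\randvec}^{\partitioncell}) \leq \sum_{\partitioncell\in\partition'}\probability^{\partitioncell}\,\recfnpenalty(\bar{\randvec}^{\partitioncell})$, and the direction printed in the statement should be flagged as a typo rather than reproduced as if it had been derived.
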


To obtain upper bounds on the expected recourse function, we need an alternate form of the recourse function.

\begin{definition}
    A concave function $\recfnpenalty$ such that $\recfnpenalty(\randvec)=\recfn(\randvec)$ for all $\randvec\in\randvecsupport$ is called a \textit{concave equivalent recourse function}. (See \eqref{mod:recourse-penalty} for more details.)
\end{definition}

Next, we use this result to establish a monotonic bound for the expected recourse function $\exprecfn$ using a concave equivalent recourse function $\recfnpenalty$.

\begin{lemma}{Monotonic Bounds for Recourse  Objective.}\label{lem:lb}

For fixed $\firststagevecfix$ and $\attackvecfix$ and partitions $\partition$ and $\partition'$ of $\randvecsupport$ such that $\partition$ is a refinement of $\partition'$, the following holds:

\begin{align} 
\exprecfn(\firststagevecfix,\attackvecfix) =    \cellssum\probfncell{\firststagevecfix,\attackvecfix}\,\recourseevcell 
 &\geq   \cellssum\probfncell{\firststagevecfix,\attackvecfix}\, \recfn(\randvecevcellfn{\firststagevecfix,\attackvecfix}) = \underline{\exprecfn}(\firststagevecfix,\attackvecfix;\partition) \label{mod:bound-2} \\
 &\geq   \sum_{\partitioncell\in\partition'} \probfncell{\firststagevecfix,\attackvecfix}\, \recfn(\randvecevcellfn{\firststagevecfix,\attackvecfix}) = \underline{\exprecfn}(\firststagevecfix,\attackvecfix;\partition'). \label{mod:bound-3}
\end{align}

For concave equivalent recourse function $\recfnpenalty$:

\begin{equation}\label{eqn:penalty-ub}
    \exprecfn(\firststagevecfix,\attackvecfix)     
 \leq   \sum_{\partitioncell\in\partition'}\probfncell{\firststagevecfix,\attackvecfix}\, \recfnpenalty(\randvecevcellfn{\firststagevecfix,\attackvecfix}) = \bar{\exprecfn}(\firststagevecfix,\attackvecfix;\partition') \leq   \sum_{\partitioncell\in\partition} \probfncell{\firststagevecfix,\attackvecfix}\, \recfnpenalty(\randvecevcellfn{\firststagevecfix,\attackvecfix}) = \bar{\exprecfn}(\firststagevecfix,\attackvecfix;\partition).
\end{equation}

\end{lemma}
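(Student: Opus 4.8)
The plan is to build each inequality chain from three ingredients already in hand: the tower property of conditional expectation, Jensen's inequality applied cell-by-cell, and Proposition~\ref{lem:jensen} applied to the (fixed) conditional distribution induced by $(\firststagevecfix,\attackvecfix)$. Throughout, freezing $(\firststagevecfix,\attackvecfix)$ turns the decision-dependent law of $\randvec$ into an ordinary fixed distribution, which is the key observation that lets the decision-independent Proposition~\ref{lem:jensen} carry over verbatim.

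First I would establish the opening equality in \eqref{mod:bound-2}. Since $\partition$ is a partition of $\randvecsupport$, the law of total expectation decomposes $\exprecfn(\firststagevecfix,\attackvecfix)=\condEV{\recfn(\randvec)}{\firststagevecfix,\attackvecfix}$ over the cells as $\cellssum\probfncell{\firststagevecfix,\attackvecfix}\,\condEV{\recfn(\randvec)}{\randvec\in\partitioncell,\firststagevecfix,\attackvecfix}$, which is $\cellssum\probfncell{\firststagevecfix,\attackvecfix}\,\recourseevcell$ by definition of the conditional-cell quantities. For the first inequality I would apply Jensen inside each cell: because $\recfn$ is convex on the convex hull of $\randvecsupport$ (LP theory), $\condEV{\recfn(\randvec)}{\randvec\in\partitioncell,\firststagevecfix,\attackvecfix}\geq\recfn(\randvecevcellfn{\firststagevecfix,\attackvecfix})$; multiplying each cell inequality by the nonnegative weight $\probfncell{\firststagevecfix,\attackvecfix}$ and summing yields \eqref{mod:bound-2}. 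The step \eqref{mod:bound-3} is then exactly the convex half of Proposition~\ref{lem:jensen} applied to $\recfn$ with the refinement $\partition$ of $\partition'$, where $\probfncell{\firststagevecfix,\attackvecfix}$ and $\randvecevcellfn{\firststagevecfix,\attackvecfix}$ play the roles of $\probability^{\partitioncell}$ and $\bar{\randvec}^{\partitioncell}$.

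For the upper bound \eqref{eqn:penalty-ub}, the starting move is to replace $\recfn$ by the concave equivalent $\recfnpenalty$ inside the expectation: since $\recfnpenalty(\randvec)=\recfn(\randvec)$ for every $\randvec\in\randvecsupport$ and $\randvec$ takes values in $\randvecsupport$, we have $\exprecfn(\firststagevecfix,\attackvecfix)=\condEV{\recfnpenalty(\randvec)}{\firststagevecfix,\attackvecfix}$. Decomposing over the cells of $\partition'$ and applying Jensen in the concave direction within each cell gives $\condEV{\recfnpenalty(\randvec)}{\randvec\in\partitioncell,\firststagevecfix,\attackvecfix}\leq\recfnpenalty(\randvecevcellfn{\firststagevecfix,\attackvecfix})$, so summing against the weights produces the first inequality of \eqref{eqn:penalty-ub}. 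The final inequality is the concave half of Proposition~\ref{lem:jensen} applied to $\recfnpenalty$, again with the frozen conditional distribution.

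I do not expect a deep obstacle, since the proof is essentially an assembly of standard pieces. The step requiring the most care is ensuring that the cell conditional means $\randvecevcellfn{\firststagevecfix,\attackvecfix}$ — which are convex combinations of points of $\randvecsupport$ — lie in the common domain on which $\recfn$ is convex and $\recfnpenalty$ is concave, namely the convex hull of $\randvecsupport$, so that each cell-wise Jensen step and each invocation of Proposition~\ref{lem:jensen} is legitimate; this is precisely where the concave equivalent recourse function, extending $\recfn$ off the discrete support, earns its keep. A secondary point worth stating explicitly is that the decision dependence enters only through the fixed weights $\probfncell{\firststagevecfix,\attackvecfix}$ and the cell means, so that no part of the argument is complicated by the decision-dependent probabilities once $(\firststagevecfix,\attackvecfix)$ are fixed.
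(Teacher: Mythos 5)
Your proof is correct and takes essentially the same route as the paper's own (very terse) proof: the opening equality via the law of total expectation, the first inequality via cell-wise Jensen, the monotonicity step via Proposition~\ref{lem:jensen} with the decision-dependent law frozen at $(\firststagevecfix,\attackvecfix)$, and the bounds in \eqref{eqn:penalty-ub} by the symmetric concave argument after substituting $\recfnpenalty$ for $\recfn$ on the support. Your added care about the cell means lying in the convex hull of $\randvecsupport$ and about the role of the fixed decisions is extra detail on the same argument, not a different approach.
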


\begin{proof}
The first equality follows from the law of total expectation, and the first inequality immediately follows from Jensen's inequality. The second inequality follows from Lemma \ref{lem:jensen}. The results in \eqref{eqn:penalty-ub} are obtained similarly.
\end{proof}

The following result, without proof, shows that for a complete refinement, i.e., $\partition=\randvecsupport$, the lower bound is tight.

\begin{corollary}{Tight Recourse Bounds for Finite Support}\label{cor:tight-bounds}

For fixed $\firststagevecfix$ and $\attackvecfix$ the following holds:

\begin{equation} 
\underline{\exprecfn}(\firststagevecfix,\attackvecfix;\randvecsupport) = \sum_{\partitioncell\in\randvecsupport}\probfncell{\firststagevecfix,\attackvecfix}\, \recfn(\randvecevcellfn{\firststagevecfix,\attackvecfix}) = \exprecfn(\firststagevecfix,\attackvecfix) = \sum_{\partitioncell\in\randvecsupport}\probfncell{\firststagevecfix,\attackvecfix}\, \recfnpenalty(\randvecevcellfn{\firststagevecfix,\attackvecfix})
= \bar{\exprecfn}(\firststagevecfix,\attackvecfix;\randvecsupport)
\end{equation}

\end{corollary}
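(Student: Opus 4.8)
The plan is to specialize the monotone bounds of Lemma~\ref{lem:lb} to the \emph{finest} partition, in which every cell is a single atom of the support, and to observe that Jensen's inequality collapses to an equality for a point mass. First I would make the notation $\partition=\randvecsupport$ precise: since $\randvecsupport$ is finite, the complete refinement is the partition whose cells are the singletons $\{\randvec\}$ for $\randvec\in\randvecsupport$. This is a legitimate refinement of every coarser partition of $\randvecsupport$, so the bounds \eqref{mod:bound-2} and \eqref{eqn:penalty-ub} of Lemma~\ref{lem:lb} apply to it.

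The key step is to evaluate the per-cell quantities on a singleton cell $\partitioncell=\{\randvec\}$. Conditioning on $\randvec\in\partitioncell$ fixes the random vector at the single point $\randvec$, so the conditional expectation degenerates to $\randvecevcellfn{\firststagevecfix,\attackvecfix}=\randvec$ and the cell probability reduces to $\probfncell{\firststagevecfix,\attackvecfix}=\condprobfn{\randvec}{\firststagevecfix,\attackvecfix}$. Substituting these into the lower-bound expression of \eqref{mod:bound-2} gives
\[
\underline{\exprecfn}(\firststagevecfix,\attackvecfix;\randvecsupport)=\sum_{\randvec\in\randvecsupport}\condprobfn{\randvec}{\firststagevecfix,\attackvecfix}\,\recfn(\randvec),
\]
which is precisely the definition of $\exprecfn(\firststagevecfix,\attackvecfix)$ stated in \S\ref{sec:struct-properties}. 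This settles the first three equalities.

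For the remaining two equalities I would invoke the defining property of the concave equivalent recourse function, namely $\recfnpenalty(\randvec)=\recfn(\randvec)$ for every $\randvec\in\randvecsupport$. Because the singleton cells evaluate $\recfnpenalty$ only at atoms of the support, on each cell we have $\recfnpenalty(\randvecevcellfn{\firststagevecfix,\attackvecfix})=\recfnpenalty(\randvec)=\recfn(\randvec)$, so the upper-bound expression $\bar{\exprecfn}(\firststagevecfix,\attackvecfix;\randvecsupport)$ collapses to the same weighted sum and therefore also equals $\exprecfn(\firststagevecfix,\attackvecfix)$.

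There is no genuine obstacle here; all the content sits in the single observation that the conditional expectation over a singleton cell is the atom itself, which simultaneously closes the Jensen gap in \eqref{mod:bound-2} and the concave-equivalent gap in \eqref{eqn:penalty-ub}. The only points requiring care are bookkeeping: confirming that $\partition=\randvecsupport$ is shorthand for the singleton partition (not the support set as a cell), and noting that finiteness of $\randvecsupport$ is exactly what guarantees this partition exists and has finitely many cells so the sums are well defined.
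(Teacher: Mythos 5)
Your proof is correct, and it fills in exactly the argument the paper leaves implicit: the paper states this corollary explicitly ``without proof,'' treating it as an immediate consequence of Lemma~\ref{lem:lb} once one observes that the complete refinement consists of singleton cells, on which the conditional expectation degenerates to the atom and $\recfnpenalty$ agrees with $\recfn$ by definition. Since your write-up is precisely that intended specialization (including the careful reading of $\partition=\randvecsupport$ as the singleton partition), there is nothing to add.
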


For a fixed defender solution $\firststagevecfix$, let $\attackobjvar(\firststagevecfix)^*$ denote the attacker's optimal objective. 
The following result, which provides an upper bound in the attacker's problem, immediately follows from Lemma \ref{lem:lb}.

\begin{corollary}{Monontic Upper Bound on Attacker's Optimal Objective Value.}\label{cor:attacker-ub}

For partitions $\partition$ and $\partition'$ of $\randvecsupport$ such that $\partition$ is a refinement of $\partition'$, the following holds for a concave equivalent recourse function $\recfnpenalty$:

\begin{align} 
\attackobjvar(\firststagevecfix)^* =  \max_{\attackvec\in\attackfeasregion}  \cellssum\probfncell{\firststagevecfix,\attackvec}\,\condEV{\recfnpenalty(\randvec)}{\randvec\in \partitioncell,\firststagevecfix,\attackvec} &\geq  \max_{\attackvec\in\attackfeasregion}  \cellssum\probfncell{\firststagevecfix,\attackvec}\, \recfnpenalty(\randvecevcellfn{\firststagevecfix,\attackvec}) = \attackobjvarlb(\firststagevecfix;\partition)
 \label{mod:bound-4}\\
  &\geq   \max_{\attackvec\in\attackfeasregion}\sum_{\partitioncell\in\partition'} \probfncell{\firststagevecfix,\attackvec}\, \recfnpenalty(\randvecevcellfn{\firststagevecfix,\attackvec}) = \attackobjvarlb(\firststagevecfix;\partition'). \label{mod:bound-5}
\end{align}

For finite $\randvecsupport$:
\begin{equation}
\attackobjvar(\firststagevecfix)^* =  \max_{\attackvec\in\attackfeasregion}  \cellssum\probfncell{\firststagevecfix,\attackvec}\,\condEV{\recfnpenalty(\randvec)}{\randvec\in \partitioncell,\firststagevecfix,\attackvec} =  \max_{\attackvec\in\attackfeasregion}  \sum_{\partitioncell\in\randvecsupport}\probfncell{\firststagevecfix,\attackvec}\, \recfnpenalty(\randvecevcellfn{\firststagevecfix,\attackvec}) = \attackobjvarlb(\firststagevecfix;\randvecsupport)
.
\end{equation}

\end{corollary}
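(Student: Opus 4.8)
The plan is to obtain the entire chain by applying Lemma~\ref{lem:lb} separately for each fixed attacker decision $\attackvec\in\attackfeasregion$ and then passing to the maximum, using only the monotonicity of the $\max$ operator. Nothing beyond Lemma~\ref{lem:lb}, the definition of a concave equivalent recourse function, and (for the finite-support case) Corollary~\ref{cor:tight-bounds} is required.

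First I would establish the opening equality. Because $\recfnpenalty$ is a concave equivalent recourse function, $\recfnpenalty(\randvec)=\recfn(\randvec)$ for every $\randvec\in\randvecsupport$, so for each $\attackvec$ and each cell the conditional expectations agree, $\condEV{\recfnpenalty(\randvec)}{\randvec\in\partitioncell,\firststagevecfix,\attackvec}=\condEV{\recfn(\randvec)}{\randvec\in\partitioncell,\firststagevecfix,\attackvec}$. Summing over the cells of $\partition$ with weights $\probfncell{\firststagevecfix,\attackvec}$ and invoking the law of total expectation gives $\cellssum\probfncell{\firststagevecfix,\attackvec}\,\condEV{\recfnpenalty(\randvec)}{\randvec\in\partitioncell,\firststagevecfix,\attackvec}=\exprecfn(\firststagevecfix,\attackvec)$ for every fixed $\attackvec$; taking $\max_{\attackvec\in\attackfeasregion}$ of both sides yields the first equality, $\attackobjvar(\firststagevecfix)^{*}=\max_{\attackvec}\cellssum\probfncell{\firststagevecfix,\attackvec}\,\condEV{\recfnpenalty(\randvec)}{\randvec\in\partitioncell,\firststagevecfix,\attackvec}$.

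Next I would fix an arbitrary $\attackvec\in\attackfeasregion$, identify it with the $\attackvecfix$ appearing in Lemma~\ref{lem:lb}, and read off the ordering among $\exprecfn(\firststagevecfix,\attackvec)$, $\bar{\exprecfn}(\firststagevecfix,\attackvec;\partition)$, and $\bar{\exprecfn}(\firststagevecfix,\attackvec;\partition')$ supplied by \eqref{eqn:penalty-ub}; since $\partition$ refines $\partition'$, this ordering is valid \emph{for every} $\attackvec$, i.e.\ the three functions of $\attackvec$ are ordered uniformly on all of $\attackfeasregion$. The remaining step is to transfer this uniform pointwise ordering to the three maxima. The mechanism is elementary: if $g(\attackvec)\le h(\attackvec)$ for all $\attackvec\in\attackfeasregion$, then $g(\attackvec)\le h(\attackvec)\le\max_{\attackvec'\in\attackfeasregion}h(\attackvec')$ for every $\attackvec$, so taking the maximum over $\attackvec$ on the left gives $\max_{\attackvec}g(\attackvec)\le\max_{\attackvec}h(\attackvec)$. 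Applying this to consecutive terms of the ordering in \eqref{eqn:penalty-ub} chains the maxima and produces the displayed relations \eqref{mod:bound-4}--\eqref{mod:bound-5}; the finite-support equalities follow identically, replacing Lemma~\ref{lem:lb} by Corollary~\ref{cor:tight-bounds} at the complete refinement $\partition=\randvecsupport$ and again taking the maximum.

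I expect the one genuine subtlety---and hence the main obstacle---to be this passage to the maxima: the attacker's optimal $\attackvec$ is in general \emph{different} for $\exprecfn(\firststagevecfix,\cdot)$, for $\bar{\exprecfn}(\firststagevecfix,\cdot;\partition)$, and for $\bar{\exprecfn}(\firststagevecfix,\cdot;\partition')$, so no single attack plan simultaneously attains all three optima and the inequalities between optimal values cannot be read off from any one realized decision. They must instead be derived from the fact that the bound holds \emph{uniformly} across $\attackfeasregion$ together with monotonicity of $\max$, exactly as above. Once this is made explicit, the corollary follows immediately, every remaining manipulation being inherited verbatim from Lemma~\ref{lem:lb}.
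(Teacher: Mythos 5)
Your strategy---apply Lemma \ref{lem:lb} pointwise for each fixed $\attackvec\in\attackfeasregion$ and then pass to maxima via monotonicity of $\max$---is precisely the route the paper intends (the paper offers no explicit proof, asserting only that the corollary ``immediately follows from Lemma \ref{lem:lb}''), and your explicit treatment of the fact that no single attack plan attains all three optima is the right way to make that passage rigorous. The opening equality (via $\recfnpenalty=\recfn$ on $\randvecsupport$ and the law of total expectation) and the finite-support equalities (via Corollary \ref{cor:tight-bounds}) are argued correctly.

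There is, however, a genuine gap at the central step. The ordering supplied by \eqref{eqn:penalty-ub} places the true value at the \emph{bottom} of the chain: $\exprecfn(\firststagevecfix,\attackvec)\leq\bar{\exprecfn}(\firststagevecfix,\attackvec;\partition')\leq\bar{\exprecfn}(\firststagevecfix,\attackvec;\partition)$ for every $\attackvec$. Since taking $\max_{\attackvec\in\attackfeasregion}$ preserves, and can never reverse, a uniform pointwise inequality, your mechanism delivers $\attackobjvar(\firststagevecfix)^*\leq\max_{\attackvec\in\attackfeasregion}\bar{\exprecfn}(\firststagevecfix,\attackvec;\partition')\leq\max_{\attackvec\in\attackfeasregion}\bar{\exprecfn}(\firststagevecfix,\attackvec;\partition)$, whereas \eqref{mod:bound-4}--\eqref{mod:bound-5} assert the opposite ordering, $\attackobjvar(\firststagevecfix)^*\geq\attackobjvarlb(\firststagevecfix;\partition)\geq\attackobjvarlb(\firststagevecfix;\partition')$, with the true value at the \emph{top}. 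No appeal to monotonicity of $\max$ can turn a uniform $\leq$ chain into a $\geq$ chain, so the claim that your argument ``produces the displayed relations'' fails. The $\geq$ chain displayed in the corollary is the one that would follow from the \emph{convex} part of Lemma \ref{lem:lb}, i.e., \eqref{mod:bound-2}--\eqref{mod:bound-3} with $\recfn$ in place of $\recfnpenalty$; for the concave equivalent $\recfnpenalty$, the conclusion consistent with Jensen's inequality---and with the corollary's own title, and with how Corollary \ref{cor:attacker-ub} is later invoked in \S\ref{sec:application}, where refinement is said to yield an upper bound no larger than the previous one---is the reversed chain $\attackobjvar(\firststagevecfix)^*\leq\attackobjvarub(\firststagevecfix;\partition)\leq\attackobjvarub(\firststagevecfix;\partition')$. (Part of the blame lies with the source: the concave halves of Proposition \ref{lem:jensen} and of \eqref{eqn:penalty-ub} themselves have $\partition$ and $\partition'$ interchanged relative to what Jensen's inequality gives, and your proof inherits that inconsistency without flagging it.) A sound write-up must either prove the reversed $\leq$ chain for $\recfnpenalty$ or prove the stated $\geq$ chain using $\recfn$; it cannot, as written, derive \eqref{mod:bound-4}--\eqref{mod:bound-5} from \eqref{eqn:penalty-ub}, and a direction check before asserting the result would have exposed this.
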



Let $\attackplansset\subseteq\attackfeasregion$ be a subset of feasible attacker solutions, and let $\objvar(\attackplansset)^*$ be the optimal objective of the original problem \eqref{mod:dad-exp-recourse-fn} when the attacker is restricted to solutions in $\attackplansset$. The next result, which follows from Lemma \ref{lem:lb} and Corollary \ref{cor:tight-bounds}, forms the basis of our solution methodology.

\begin{proposition}{Monotonic Lower Bound on Defender-Attacker Problem}\label{prop:defender-lb}

For partitions $\partition$ and $\partition'$ of $\randvecsupport$ such that $\partition$ is a refinement of $\partition'$, the following holds:

\begin{align} 
\objvar(\attackplansset)^* =  \min_{\firststagevec\in\firststagefeasregion}  \max_{\attackvec\in\attackplansset} \cellssum\probfncell{\firststagevec,\attackvec}\,\recourseevcell
 &\geq  \min_{\firststagevec\in\firststagefeasregion} \max_{\attackvec\in\attackplansset} \cellssum\probfncell{\firststagevec,\attackvec}\, \recfn(\randvecevcellfn{\firststagevec,\attackvec}) = \objvarlb(\attackplansset;\partition), \label{mod:da-bound-1}\\
 &\geq  \min_{\firststagevec\in\firststagefeasregion} \max_{\attackvec\in\attackplansset} \sum_{\partitioncell\in\partition'}\probfncell{\firststagevec,\attackvec}\, \recfn(\randvecevcellfn{\firststagevec,\attackvec}) = \objvarlb(\attackplansset;\partition'), \label{mod:da-bound-2}
\end{align}

For finite $\randvecsupport$:

\begin{equation}
    \objvar(\attackplansset)^* =  \min_{\firststagevec\in\firststagefeasregion}  \max_{\attackvec\in\attackplansset} \cellssum\probfncell{\firststagevec,\attackvec}\,\recourseevcell = \min_{\firststagevec\in\firststagefeasregion}\max_{\attackvec\in\attackfeasregion}  \sum_{\partitioncell\in\randvecsupport}\probfncell{\firststagevec,\attackvec}\, \recfn(\randvecevcellfn{\firststagevec,\attackvec}) = \objvarlb(\attackplansset;\randvecsupport).
\end{equation}

\end{proposition}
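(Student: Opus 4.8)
The plan is to derive every relation in the statement from its pointwise counterpart already proved in Lemma~\ref{lem:lb} and Corollary~\ref{cor:tight-bounds}, and then to transfer each pointwise (in)equality to the displayed (in)equalities by exploiting the monotonicity of the composed operator $\min_{\firststagevec\in\firststagefeasregion}\max_{\attackvec\in\attackplansset}$. The single structural fact I would isolate first is this monotonicity: if $f(\firststagevec,\attackvec)\ge g(\firststagevec,\attackvec)$ for every feasible pair $(\firststagevec,\attackvec)$, then for each fixed $\firststagevec$ we have $\max_{\attackvec\in\attackplansset}f(\firststagevec,\attackvec)\ge\max_{\attackvec\in\attackplansset}g(\firststagevec,\attackvec)$, and consequently $\min_{\firststagevec\in\firststagefeasregion}\max_{\attackvec\in\attackplansset}f\ge\min_{\firststagevec\in\firststagefeasregion}\max_{\attackvec\in\attackplansset}g$; the last step holds because, taking $\firststagevec$ to attain the left-hand minimum, its value dominates $g$ at the same point, which in turn dominates the minimum of the $g$-value. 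Once this is in hand, all three relations are immediate substitutions.

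For the leading equality I would start from the definition $\objvar(\attackplansset)^*=\min_{\firststagevec\in\firststagefeasregion}\max_{\attackvec\in\attackplansset}\exprecfn(\firststagevec,\attackvec)$, which is just problem~\eqref{mod:dad-exp-recourse-fn} with the attacker confined to $\attackplansset$, and then replace $\exprecfn(\firststagevec,\attackvec)$ by $\cellssum\probfncell{\firststagevec,\attackvec}\,\recourseevcell$ using the law-of-total-expectation identity from Lemma~\ref{lem:lb}, valid at each fixed $(\firststagevec,\attackvec)$. For inequality~\eqref{mod:da-bound-1} I would invoke the pointwise Jensen bound~\eqref{mod:bound-2}, namely $\cellssum\probfncell{\firststagevec,\attackvec}\,\recourseevcell\ge\cellssum\probfncell{\firststagevec,\attackvec}\,\recfn(\randvecevcellfn{\firststagevec,\attackvec})$, and then apply the monotonicity above with $f$ the left side and $g$ the right side. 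For inequality~\eqref{mod:da-bound-2} I would use the refinement bound~\eqref{mod:bound-3}, $\cellssum\probfncell{\firststagevec,\attackvec}\,\recfn(\randvecevcellfn{\firststagevec,\attackvec})\ge\sum_{\partitioncell\in\partition'}\probfncell{\firststagevec,\attackvec}\,\recfn(\randvecevcellfn{\firststagevec,\attackvec})$ (valid because $\partition$ refines $\partition'$), and again push it through $\min$--$\max$. The finite-support equality follows by taking the complete refinement $\partition=\randvecsupport$ and applying Corollary~\ref{cor:tight-bounds}, which makes the pointwise lower bound exact, $\underline{\exprecfn}(\firststagevec,\attackvec;\randvecsupport)=\exprecfn(\firststagevec,\attackvec)$ at every $(\firststagevec,\attackvec)$; applying monotonicity in both directions then forces the two min--max values to coincide, and specializing $\attackplansset$ to the full region $\attackfeasregion$ recovers the optimum of~\eqref{mod:dad-exp-recourse-fn}.

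The main thing to get right, rather than a true obstacle, is the monotonicity argument together with the observation that the decision-dependence of the probabilities poses no difficulty. Because each bound from Lemma~\ref{lem:lb} and Corollary~\ref{cor:tight-bounds} is asserted for a \emph{fixed} pair $(\firststagevec,\attackvec)$, the dependence of both the cell weights $\probfncell{\firststagevec,\attackvec}$ and the conditional means $\randvecevcellfn{\firststagevec,\attackvec}$ on the decisions is already incorporated into the pointwise inequality before the operator is applied; no interchange of $\min$ and $\max$, and no saddle-point or convexity-in-$(\firststagevec,\attackvec)$ hypothesis, is required. I would therefore keep the proof to the two-line monotonicity lemma followed by three one-line substitutions, mirroring the brevity of the proof of Lemma~\ref{lem:lb}.
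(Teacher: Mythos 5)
Your proposal is correct and takes essentially the same route as the paper, which states the proposition without a written proof, asserting only that it ``follows from Lemma~\ref{lem:lb} and Corollary~\ref{cor:tight-bounds}.'' Your argument simply makes that implicit reasoning explicit: the pointwise bounds of Lemma~\ref{lem:lb} (and their tightness under complete refinement from Corollary~\ref{cor:tight-bounds}) are pushed through the $\min_{\firststagevec\in\firststagefeasregion}\max_{\attackvec\in\attackplansset}$ operator via its monotonicity under pointwise domination, with no min--max interchange needed.
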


The following shows that the original problem is recovered when the partition is completely refined and all feasible attacker solutions are included.

\begin{corollary}
\label{cor:tight-lb-da}
$\objvar(\attackplansset)^* = \objvarlb(\attackplansset;\randvecsupport)$ for all $\attackplansset\subseteq\attackfeasregion$ and $\objvarlb(\attackfeasregion;\randvecsupport)^*=\objvar(\attackfeasregion)=\objvar^*$.
\end{corollary}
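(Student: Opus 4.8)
The plan is to read the corollary as a direct specialization of Proposition~\ref{prop:defender-lb} to the finest (singleton) partition, combined with the observation that leaving the attacker unrestricted reproduces the original problem~\eqref{mod:dad-exp-recourse-fn}. I would split the statement into its two asserted equalities and dispatch them in turn, since each is essentially an instantiation of an already-established result.

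For the first equality, $\objvar(\attackplansset)^* = \objvarlb(\attackplansset;\randvecsupport)$ for every $\attackplansset\subseteq\attackfeasregion$, I would begin by noting that $\partition=\randvecsupport$ is the \emph{complete} refinement: because $\randvecsupport$ is finite, each cell is a singleton, so the conditional mean satisfies $\randvecevcellfn{\firststagevec,\attackvec}=\randvec$ and the Jensen gap appearing in Lemma~\ref{lem:lb} collapses. Corollary~\ref{cor:tight-bounds} records exactly this as the pointwise identity $\underline{\exprecfn}(\firststagevec,\attackvec;\randvecsupport)=\exprecfn(\firststagevec,\attackvec)$, valid for \emph{every} fixed pair $(\firststagevec,\attackvec)\in\firststagefeasregion\times\attackfeasregion$. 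Since this equality holds uniformly over the whole domain, I would substitute it inside the $\min$--$\max$ that defines $\objvar(\attackplansset)^*$, replacing the inner objective $\exprecfn$ by $\underline{\exprecfn}(\,\cdot\,;\randvecsupport)$ without changing the value; the resulting expression is by definition $\objvarlb(\attackplansset;\randvecsupport)$. This reproduces the ``finite $\randvecsupport$'' line of Proposition~\ref{prop:defender-lb}, so the first equality could equally well be quoted verbatim from there.

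For the second equality I would set $\attackplansset=\attackfeasregion$. By the definition given just before Proposition~\ref{prop:defender-lb}, $\objvar(\attackplansset)^*$ is the optimal value of~\eqref{mod:dad-exp-recourse-fn} with the attacker confined to $\attackplansset$; taking $\attackplansset=\attackfeasregion$ imposes no restriction, so $\objvar(\attackfeasregion)^*$ coincides with $\objvar^*$. Chaining this with the first equality instantiated at $\attackplansset=\attackfeasregion$ then gives $\objvarlb(\attackfeasregion;\randvecsupport)=\objvar(\attackfeasregion)^*=\objvar^*$, as claimed.

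Because both parts are specializations of results already proved, I do not expect a genuine obstacle. The only point that warrants explicit care is the step of moving the \emph{pointwise} equality of Corollary~\ref{cor:tight-bounds} (stated for fixed $(\firststagevec,\attackvec)$) inside the $\min_{\firststagevec\in\firststagefeasregion}\max_{\attackvec\in\attackplansset}$ operators. This is legitimate because two functions that agree at every point of a common domain share all their extremal values, but I would state it explicitly so that the reduction from a per-point identity to an equality of optimal values is fully justified.
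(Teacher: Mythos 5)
Your proposal is correct and matches the paper's intent exactly: the paper states this corollary without a separate proof precisely because it is the specialization of Proposition \ref{prop:defender-lb}'s finite-support equality to the complete partition $\partition=\randvecsupport$, combined with the observation that taking $\attackplansset=\attackfeasregion$ removes the restriction on the attacker and recovers problem \eqref{mod:dad-exp-recourse-fn}. Your added care in lifting the pointwise identity of Corollary \ref{cor:tight-bounds} through the $\min$--$\max$ operators is a sound and welcome explicit justification of a step the paper leaves implicit.
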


\section{Successive Refinement Framework}\label{sec:methods}

In this section, we present a successive refinement framework that can be applied to general defender-attacker games. First, Corollary \ref{cor:tight-lb-da} motivates an approach in which attack solutions are dynamically generated and the partition is dynamically refined. In such an approach we solve a \textit{restricted problem} for a set of attack plans $\attackvec^1,\dots,\attackvec^{\numattackplans}$ and partition $\partition$, which yields a lower bound. In this formulation, we also add variables and constraints to model the recourse objective $\recfn(\randvecevcellfn{\firststagevec,\attackvec})$.

\begin{mini!}|s|
    {\firststagevec\in\firststagefeasregion,\auxvar\geq 0,\secondstagevec}{\auxvar \label{obj:defender-epi}}{\label{mod:defender-epi-resticted}}{}
    \addConstraint{\auxvar}{\geq  \sum_{\partitioncell\in\partition}\probfncell{\firststagevec,\attackvec^\attackplan}\, (\secondstagecostvec^T\secondstagevec^{\partitioncell,\attackplan}}){\quad\forall \attackplansinlist\label{eqn:attack-cut-epi-restricted}}
    \addConstraint{\secondstagematrixfixed\secondstagevec^{\partitioncell,\attackplan}}{= \secondstagerhsfixedvec}{\quad \forall \partitioncell\in\partition, \attackplansinlist\label{constr:fixed-constrs-defender}}
    \addConstraint{\secondstagevec^{\partitioncell,\attackplan}}{\leq \secondstagerandvarcoefvec^T\,\randvecevcellfn{\firststagevec,\attackvec^\attackplan}}{\quad \forall \partitioncell\in\partition, \attackplansinlist.\label{constr:var-ub-defender}}
    \end{mini!}

Solving \eqref{mod:defender-epi-resticted} for partition $\partition$ yields a defender's solution $\firststagevecfix$. With $\firststagevecfix$ fixed, the attacker's \textit{approximate} best response can be obtained by solving the following formulation in which we use the common \textit{dualize-and-combine} approach to replace the inner minimization problem in $\recfn(\randvecevcellfn{\firststagevec,\attackvec})$ with its dual for each cell $\partitioncell$:

\begin{maxi}|s|
    {\attackvec\in\attackfeasregion,\constrdualvec,\constrdualubvec}{\sum_{\partitioncell\in\partition}\probfncell{\firststagevecfix,\attackvec}\,(\constrdualvec^{\partitioncell})^T\secondstagerhsfixedvec + \secondstagerandvarcoefvec^T\,\randvecevcellfn{\firststagevecfix,\attackvec})}{}{\label{mod:attacker}}
    \addConstraint{\secondstagematrixfixed^T\constrdualvec^{\partitioncell} + \randvecevcellfn{\firststagevecfix,\attackvec}}{\geq \secondstagecostvec}{\quad \forall \partitioncell\in\partition}
\end{maxi}

Algorithm \ref{alg:sra-attacker} (see appendix) describes an alternate version in which refinements are made after solving \eqref{mod:attacker} completely. The rationale for this approach is that when the number of elements in the partition is small, then \eqref{mod:defender-epi-resticted} and \eqref{mod:attacker} have a small number of multilinear terms, each with small degree, making the formulations much easier to solve. The partition is successively refined until lower and upper bounds are sufficiently close. In line \ref{line:compute-error} of Algorithm \ref{alg:sra-attacker}, the \textit{probability-weighted approximation error} is computed for each cell $\partitioncell\in\partition$, motivated by the fact that, for a given partition $\partition$ and fixed solution $(\firststagevecfix,\attackvecfix)$, the expected recourse objective has the following bounds:

\begin{align} 
\ubar{\exprecfn}(\firststagevecfix,\attackvecfix;\partition) = \cellssum\probfncell{\firststagevecfix,\attackvecfix}\, \recfn(\randvecevcellfn{\firststagevecfix,\attackvecfix}) \leq \exprecfn(\firststagevecfix,\attackvecfix)    
 \leq   \cellssum\probfncell{\firststagevecfix,\attackvecfix}\, UB^{\partitioncell}(\firststagevecfix,\attackvec)= \bar{\exprecfn}(\firststagevecfix,\attackvecfix;\partition), \label{mod:ub-1}
\end{align}

\noindent where $UB^{\partitioncell}(\firststagevecfix,\attackvec)$ is an upper bound on the true expected recourse value for $\partitioncell$, i.e., $\condEV{\recfn(\randvec)}{\randvec\in \partitioncell,\firststagevecfix,\attackvec}$. Section \ref{sec:compute-ub} describes different methods for computing $UB^{\partitioncell}(\firststagevecfix,\attackvec)$. Thus, an estimate of the gap between the upper and lower bounds is:
$\cellssum\probfncell{\firststagevecfix,\attackvec} (UB^{\partitioncell}(\firststagevecfix,\attackvec) - \recfn(\randvecevcellfn{\firststagevecfix,\attackvec})
$, and an estimate of the probability-weighted approximation error for a cell $\partitioncell$ is:

\begin{equation}
\textsc{Error}(\partitioncell,\firststagevecfix,\attackvec) = \probfncell{\firststagevecfix,\attackvec} (UB^{\partitioncell}(\firststagevecfix,\attackvec) - \recfn(\randvecevcellfn{\firststagevecfix,\attackvec}),
\label{eqn:approx-error-cell}
\end{equation}

\noindent If the maximum of these weighted errors exceeds a threshold $\epsilon$, then the partition is refined in line \ref{line:refine} of Algorithm \ref{alg:sra-attacker}, as described in \S\ref{sec:refine-cell}. Algorithm \ref{alg:sra-da} (see appendix) describes how to use the successive refinement algorithm for the defender-attacker problem. This algorithm alternates between solving the main problem \eqref{mod:defender-epi-resticted} and solving the attacker's subproblem \eqref{mod:attacker}. For each new attacker's solution generated, a cut is added to \eqref{mod:defender-epi-resticted}, and the partition is (possibly) refined.

\subsection{Computing Upper Bound Estimates for a Cell}\label{sec:compute-ub}

A general approach for computing the upper bound term $UB^{\partitioncell}(\firststagevecfix,\attackvec)$ in \eqref{eqn:approx-error-cell} is via \textit{Monte Carlo sampling}, i.e.,
$UB^{\partitioncell}(\firststagevecfix,\attackvec) = \frac{1}{N'}\sum_{i=1}^{N'} \recfn(\randvec^i),
$
where $(\randvec^i)_{i=1}^{N'}$ is a sample of size $N'$ from the distribution defined by $\probfncell{\firststagevecfix,\attackvec}=\condprobfncell{\randvec\in \partitioncell}{\firststagevecfix,\attackvec}$. However, for problems with special \textit{variable upper bound} structure, an upper bound can be computed without sampling. Specifically, this special structure exists in problems in which upper bounds on variables in the recourse function depend on $\randvec$, as in \eqref{mod:recourse}. This structure exists in many network flow recourse problems in which the arcs fail, and the variables correspond to arcs. This is the case for shortest path, maximum flow, and minimum flow recourse problems. When this special structure exists, the following \textit{penalty recourse problem can be formulated} (see \cite{Cormican.1998} and \cite{Smith.2020}):

\begin{mini!}|s|
    {\secondstagevec\in \recoursefeasregion}{\secondstagecostvec^T\secondstagevec + 
    \secondstagevec M \randvec
    \label{eq:obj-recourse-penalty}}{\label{mod:recourse-penalty}}{\recfnpenalty(\randvec) =}
    \addConstraint{\eqref{constr:fixed-constrs},}
\end{mini!}

\noindent where $M = \text{diag}(\bar{\constrdualubvec})$, where $\bar{\constrdualubvec}$ is a vector of upper bounds on the dual variables of \eqref{constr:var-ub}. \cite{Cormican.1998} showed that for an interdiction problem with maximum flow recourse, $\recfnpenalty(\randvec)=\recfn(\randvec)$ for all $\randvec\in\randvecsupport$. Also, by linear programming theory, $\recfnpenalty$ is concave.
So, for problems with variable upper bound structure, Lemma \ref{lem:lb} implies that an upper bound for a cell can be computed as:

\begin{equation}
    UB^{\partitioncell}(\firststagevecfix,\attackvec) = \recfnpenalty(\randvecevcellfn{\firststagevecfix,\attackvec}).
\end{equation}





\subsection{Choosing How to Refine a Cell}\label{sec:refine-cell}

After choosing the cell with the maximum probability-weighted error, $\partitioncellvar^{max}$, we subdivide $\partitioncellvar^{max}$ to refine the partition. We use a rectangular partitioning approach in which we subdivide cells along the axis of one random variable in $\randvec$ (cf. \cite{Cormican.1998}). Let $\randvecsupport_\compon^{\partitioncell}$ be the support of $\randvark{\compon}$ in cell $\partitioncell$, and let $\textsc{PartitionSupport}(\partitioncell,\compon)$ be a function that partitions $\randvecsupport_\compon^{\partitioncell}$, replacing $\partitioncell$ with a set of new sub-cells. In the case that $\randvec$ is a vector of Bernoulli random variables (see test problem in \S\ref{sec:results}) $\randvecsupport_\compon^{\partitioncell}=\{0,1\}$ and therefore can only be partitioned once so that $\textsc{PartitionSupport}(\partitioncell,\compon')$ will return two new disjoint subcells $\partitioncellvar'=\partitioncell\cap\{\randvec\,|\,\randvark{\compon'}=0\}$ and $\partitioncellvar''=\partitioncell\cap\{\randvec\,|\,\randvark{\compon'}=1\}$.
For the case in which $\randvec$ is a vector of random variables with a support of more than two elements, there are several different options of partitioning $\randvecsupport_\compon^{\partitioncell}$ (e.g., fully partitioning $\randvecsupport_\compon^{\partitioncell}$ to produce a new cell for each element in $\randvecsupport_\compon$ or partially partitioning $\randvecsupport_\compon^{\partitioncell}$ to result in cells consisting of subsets of $\randvecsupport_\compon$). For a fixed solution $(\firststagevecfix,\attackvecfix)$, we choose the random variable $\randvark{\compon}$ to subdivide on as the one that minimizes the resulting weighted error: $\compon'\in \arg\min_{\compon=1,\dots,\numcompon}\,\textsc{Error}(\textsc{PartitionSupport}(\partitioncellvar^{max},\compon),\firststagevecfix,\attackvecfix)$.

The function $\textsc{Refine}(\partition,\firststagevecfix,\attackvecfix,\partitioncellvar^{max})$ produces a new refinement $\partition'$ with cells that does not include $\partitioncellvar^{max}$ but includes its subcells, i.e., $\textsc{Refine}(\partition,\firststagevecfix,\attackvecfix,\partitioncellvar^{max}) = (\partition \setminus {\partitioncellvar^{max}}) \cup \textsc{PartitionSupport}(\partitioncellvar^{max},\compon')$. After the partition is refined, formulation \eqref{mod:defender-epi-resticted} is modified by replacing $\partition$ with $\partition'$ in (\ref{constr:fixed-constrs-defender}) and (\ref{constr:var-ub-defender}), and formulation \eqref{mod:attacker} is modified by replacing the partition in the objective function summation.

\section{Application of Framework to Stochastic Interdiction Problems with Decision-Dependent Failure Probabilities}\label{sec:application}

In this section, we demonstrate how the SRA framework described in the previous section can be applied to a class of stochastic interdiction problems in which the probability distribution of an component's capacity depends on both the defender's and attacker's allocation of resources to that component. The key assumption underlying the approach described in this section is that the random variables in $\randvec$ are \textit{discrete, mutually statistically independent, and have a finite support}.

First, we show how the SRA framework applies to problems in which the random variables in $\randvec$ are discrete and statistically independent so the probability of a realization of $\randvec$ is the product of the realization of the random variables in $\randvec$, i.e.,
$
	\condprobfn{\randvec}{\firststageallocvec,\attackvec} = \probprod \condprobfn{\randvark{\compon}}{\firststageallocvec,\attackvec}.
$
This product form illustrates why solving stochastic programs with decision-dependent probabilities is challenging. Even when the $\condprobfn{\randvark{\compon}}{\firststageallocvec,\attackvec}$ terms are linear in $(\firststageallocvec,\attackvec)$, this expression still consists of one or more (non-convex) multilinear expressions with degree up to $\numcompon$. In the SRA method, these terms are replaced with $\probfncell{\firststagevec,\attackvec}$. When the partition $\partition$ is small, then the $\probfncell{\firststagevec,\attackvec}$ terms have a small number of these multilinear expressions, each with small degree.

For this class of problems, the refinements of the partition $\partition$ can be made dynamically during a branch-and-bound (or spatial branch-and-bound) algorithm. To facilitate dynamic partition refinements, it is necessary to reformulate both the defender's problem \eqref{mod:defender-epi-resticted} and the attacker's problem \eqref{mod:attacker}. 

To reformulate the attacker's problem \eqref{mod:attacker}, it is useful to think of a partition $\partition$ as a tree, rooted at a node $0$, in which new branches are formed when a cell is refined on a single random variable from $\randvec$. Thus, each cell in a partition corresponds to a leaf node in a tree, and the other nodes represent cells from previous partitions. Let $\auxvar_\treenode$ be a variable that models the quantity $\probfncellindex{\firststagevecfix,\attackvec}\,\recfn(\randvecevcellfnindex{\firststagevecfix,\attackvec})$. 
Let $\parttreenodes(\partition)$ be set of nodes in the tree, and $\parttreeleafnodes(\partition)$ the set of leaf nodes, i.e., cells in $\partition$. (Hereafter, we use the index $\leaf$ to refer to a partition cell $\partitioncellk$.) We assume that the random variables in $\randvec$ are independent, so that $\probfncell{\firststagevec,\attackvec} =\prod_{\compon=1}^\numcompon\condprobfn{\randvark{\compon}\in\randvecsupport^{\partitioncell}_\compon}{\firststagevec,\attackvec}$, where $\randvecsupport^{\partitioncell}_\compon$ is the support of random variable $\randvark{\compon}$ in cell $\partitioncell$. 
Define the conditional probabilities $\condtreeprob{\firststagevec,\attackvec}=\condprobfn{\randvec\in \partitioncell[\treenode']}{\randvec\in \partitioncellk,\firststagevec,\attackvec}$, and let $\descendentnodes(\treenode;\partition)$ be the set of descendants of node $\treenode$ in the tree defined by $\partition$. Then, for a fixed defender solution $\firststagevecfix$, we formulate the attacker's problem by replacing the recourse function $\recfn$ with the dual of the concave equivalent version for cell $\cell$:

\begin{maxi!}|s|
    {\attackvec\in\attackfeasregion}{\auxvar_0 \label{eq:epi-obj}}{\label{mod:epi-attacker}}{}
    \addConstraint{\auxvar_\treenode}{\leq \sum_{\treenode'\in \descendentnodes(\treenode;\partition)} \condtreeprob{\firststagevecfix,\attackvec}\,\auxvar_{\treenode'}}{\quad \forall \treenode\in \parttreenodes(\partition)\setminus \parttreeleafnodes(\partition)\label{eqn:conditional-epigraph-attacker}}
    \addConstraint{\auxvar_\leaf}{\leq \secondstagerhsfixedvec^T\constrdualvec^{\cell}}{\quad\forall \leaf\in \parttreeleafnodes(\partition)\label{eqn:leaf-cut-attacker}}
    \addConstraint{\secondstagematrixfixed^T\constrdualvec^{\cell}}{\geq \secondstagecostvec + M \randvecevcellfnindex{\firststagevecfix,\attackvec}}{\quad \forall \cell=1,\dots,\numcells.\label{eqn:leaf-constr-attacker}}
\end{maxi!}

Relying on the assumption that the random variables in $\randvec$ are independent, constraints \eqref{eqn:conditional-epigraph-attacker} recursively compute the probability of leaf/cell $\cell$ times the expectation of the recourse function conditioned on cell $\cell$. 
Constraints \eqref{eqn:leaf-cut-attacker} and \eqref{eqn:leaf-constr-attacker} compute the recourse objective for node $\treenode$. 
Note that refining $\partition$ to obtain $\partition'$ means replacing some leaf node $\leaf$ with new set of leaf nodes $\descendentnodes(\treenode;\partition')$. Thus, for each $\leaf'\in \descendentnodes(\treenode;\partition')$ new variables $(\auxvar_{\leaf'},\constrdualvec^{\leaf'})$ are added along with constraints \eqref{eqn:conditional-epigraph-attacker}-\eqref{eqn:leaf-constr-attacker}. Because $\recfnpenalty$ is concave, adding these new constraints yields a new upper bound that is no larger than the previous one (see Corollary \ref{cor:attacker-ub}). Formulation \eqref{mod:epi-attacker} can be solved via a branch-and-bound algorithm in which the partition is dynamically refined upon the discovery of a new incumbent solution. Algorithm \ref{alg:sra-callback-attacker} contains pseudocode that can be executed as a callback routine when a new incumbent is found.
                    
\begin{algorithm}
	{\sc SuccessiveRefinementCallback}()
	\begin{algorithmic}[1]
        \STATE Let $\leaf'\in \arg\max_{\leaf\in \parttreeleafnodes(\partition)} \{\textsc{Error}(\leaf,\firststagevecfix,\attackvec)\}$ and $e^{max} =  \max_{\leaf\in \parttreeleafnodes(\partition)} \{\textsc{Error}(\leaf,\firststagevecfix,\attackvec)\}$,
        \IF{$e^{max} > \epsilon$}
            \STATE $\partition\gets \textsc{Refine}(\partition, \firststagevecfix, \attackvec, \leaf')$.
            \STATE Update formulation \eqref{mod:epi-attacker} based on new partition $\partition$.
        \ENDIF
	\end{algorithmic}
	\caption{Successive refinement callback routine for attacker's problem.}
	\label{alg:sra-callback-attacker}
\end{algorithm}

In a manner similar to the attacker's problem \eqref{mod:epi-attacker}, we next reformulate the defender's problem \eqref{mod:defender-epi-resticted}. Here we model the recourse problem at each leaf $\leaf$ with Bender's cuts, which eliminates the need to add variables each time the partition is refined. This alternate approach, which we use for our computational experiments, is the following formulation, where $(\hat{\constrdualvec}^{\cell\attackplan},\hat{\constrdualubvec}^{\cell\attackplan})$ denotes an optimal dual solution to the recourse problem $\recfn(\randvecevcellfn{\firststagevecfix,\attackvec^\attackplan})$ for a fixed defender solution $\firststagevecfix$ and attacker solution $\attackvec^\attackplan$.

\begin{mini!}|s|
    {\firststagevec\in\firststagefeasregion}{\auxvar_0 \label{eq:epi-obj-defender-benders}}{\label{mod:epi-defender-benders}}{}
    \addConstraint{\auxvar_0}{\geq \auxvar_{0,\attackplan}}{\quad\attackplansinlist\label{eqn:conditional-epigraph-root-benders}}
    \addConstraint{\auxvar_{\treenode,\attackplan}}{\geq \sum_{\treenode'\in \descendentnodes(\treenode;\partition)} \condtreeprob{\firststagevec,\attackvec^\attackplan}\,\auxvar_{\treenode',\attackplan}}{\quad \forall \treenode\in \parttreenodes(\partition)\setminus \parttreeleafnodes(\partition),\attackplansinlist\label{eqn:conditional-epigraph-benders}}
    \addConstraint{\auxvar_{\leaf,\attackplan}}{\geq \secondstagerhsfixedvec^T\hat{\constrdualvec}^{\cell\attackplan} + ((\hat{\constrdualubvec}_\compon^{\cell\attackplan}\secondstagerandvarcoef_\compon)_{\compon=1}^\numcompon)^T\,\randvecevcellfnindex{\firststagevec,\attackvec^\attackplan}}{\quad\forall \leaf\in \parttreeleafnodes(\partition),\attackplansinlist\label{eqn:leaf-cut-benders}}
\end{mini!}

As in \eqref{mod:epi-attacker}, constraints \eqref{eqn:conditional-epigraph-attacker} recursively compute the probability of leaf/cell $\cell$ times the expectation of the recourse function conditioned on cell $\cell$ for each attacker solution $\attackvec^1,\dots,\attackvec^{\numattackplans}$. 
Constraints \eqref{eqn:leaf-cut-benders} compute the recourse objective for node $\treenode$ and attacker solution $\attackvec^\attackplan$ using Bender's cuts. 
As in \eqref{mod:epi-attacker}, refining $\partition$ to obtain $\partition'$ means replacing some leaf node $\leaf$ with new set of leaf nodes $\descendentnodes(\treenode;\partition')$. Thus, for each $\leaf'\in \descendentnodes(\treenode;\partition')$ new variables $(\auxvar_{\leaf',\attackplan},\constrdualvec^{\leaf',\attackplan},\constrdualubvec^{\leaf',\attackplan})$ are added along with constraints \eqref{eqn:conditional-epigraph-benders}-\eqref{eqn:leaf-cut-benders}. Because $\recfn$ is concave, adding these new constraints yields a new lower bound that is no smaller than the previous one (see Proposition \ref{prop:defender-lb}). Algorithm \ref{alg:sra-callback} contains pseudocode for solving \eqref{mod:epi-defender-benders} using the successive refinement algorithm. In the next subsection, we describe how to apply Algorithms \ref{alg:sra-callback-attacker} and \ref{alg:sra-callback} to problems with different types of defender and attacker allocation variables.

\begin{algorithm}
	{\sc SuccessiveRefinementAlgorithmCallback}()
	\begin{algorithmic}[1]
        \STATE For each $\leaf\in \parttreeleafnodes(\partition)$, solve \eqref{mod:epi-attacker} to obtain an attack solution $\attackvec^*$
        \STATE solve (\ref{mod:recourse}) using the incumbent $\firststagevec$ and $\attackvec^*$ to obtain dual multipliers $(\constrdualvec^\leaf, \constrdualubvec^\leaf)$ and add cuts to (\ref{mod:epi-defender-benders})
        \STATE Let $\leaf'\in \arg\max_{\leaf\in \parttreeleafnodes(\partition)} \{\textsc{Error}(\leaf,\firststagevec,\attackvecfix)\}$ and $e^{max} =  \max_{\leaf\in \parttreeleafnodes(\partition)} \{\textsc{Error}(\leaf,\firststagevec,\attackvecfix)\}$,
        \IF{$e^{max} > \epsilon$}
            \STATE $\partition \gets \textsc{Refine}(\partition,\firststagevec, \attackvecfix,\leaf')$. 
            \STATE Update formulation \eqref{mod:epi-defender-benders} based on new partition $\partition$.
        \ENDIF
	\end{algorithmic}
	\caption{Successive refinement algorithm callback routine for defender-attacker problem.}
	\label{alg:sra-callback}
\end{algorithm}

\subsection{Application to Different Types of Allocation Variables}\label{sec:app-allocation-var-types}
In this section, we describe how the SRA method applies to stochastic interdiction problems with different types of allocation variables.

\subsubsection{Binary Allocation Variables}\label{sec:app-binary}

Here, we assume that both the defender and attacker allocate a discrete amount of units. For ease of exposition, but without loss of generality, we assume that each random variable $\randvark{\compon}$ corresponds to the capacity of a component $\compon$ in the system (e.g., an arc in a network) and that the probability distribution of $\randvark{\compon}$ depends only on the amount of defense and attack units allocated to component $\compon$. Under this assumption, the defender's allocation can be represented as the sum of binary variables multiplied by coefficients, i.e., $\alloclevelssum \alloclevel \firststageallocil$, where $\firststageallocil$ is a binary variable that is $1$ if $\alloclevel$ units are allocated to component $\compon$ and $0$ otherwise.  (Similarly for $\attackvec$.) Given binary $\firststagevec$ and $\attackvec$, the probability of a realization can be represented as: $
	\condprobfn{\randvec=\tilde{\randvec}}{\firststageallocvec,\attackvec} =\probprod \alloclevelssum\attackalloclevelssum \stateprobfndiscri \firststageallocil\attackallocil$
where $\stateprobfndiscri$ is the probability that $\randvark{\compon}=\tilde{\randsym}_\compon$, given that the defense allocation level is $\alloclevel$ and the attack allocation level is $\attackalloclevel$. Because the variables in $\firststageallocvec$ and $\attackvec$ are binary, the  McCormick envelopes used to linearize products of variables in $\condprobfn{\randvec=\tilde{\randvec}}{\firststageallocvec,\attackvec}$ are exact for binary solutions. 

\paragraph{Benchmark Method: Deterministic Equivalent Formulation}

When variables $\firststagevec$ and $\attackvec$ are binary, the original problem \eqref{mod:dad-exp-recourse-fn} can be formulated as the following deterministic equivalent formulation with a multilinear objective. First, we enumerate the realizations of $\randvec$ as $\randvec^1,\dots,\randvec^\numscens$ and make copies of $\secondstagevec$ for each scenario $\scen$. Then, we can formulate the problem using an epigraph formulation:

\begin{mini!}|s|
    {\firststagevec\in \firststagefeasregion}{\auxvar \label{eq:obj-multi}}{\label{mod:multi-program}}{}
    \addConstraint{\auxvar}{\geq \scenssum \left( \probprod \alloclevelssum\attackalloclevelssum \stateprobfndiscris \attackallocfixedil^{\attackplan}\firststageallocil\right) \secondstagecostvec^T\secondstagevec^{\scen, \attackplan}}{\quad\forall\attackplansinlist \label{constr:epi-lb}}
    \addConstraint{\secondstagematrixfixed\secondstagevec^{\scen, \attackplan}}{= \secondstagerhsfixedvec}{\quad \forall \scen = 1, \dots, \numscens \label{constr:fixed-constrs-att}}
    \addConstraint{\secondstagevec^{\scen,\attackplan}}{\leq \secondstagerandvarcoefvec^T\randvec^\scen}{\quad  \forall \scen = 1, \dots, \numscens,\label{constr:var-ub-att}}
\end{mini!}

Because it is impractical to enumerate all feasible attack solutions $\attackvecfix$ in the set $\attackfeasregion$, formulation \eqref{mod:multi-program} can be solved by dynamically adding new cuts of the form \eqref{constr:epi-lb} for selected attack solutions, $\attackvec^1,\dots,\attackvec^{\numattackplans}$ within a Benders decomposition algorithm. For a given incumbent solution $\firststageallocvecfix$, a cut is obtained from a solution to the following attacker's problem:

\begin{maxi!}|s|
    {\attackvec\in \attackfeasregion}{\scenssum \left( \probprod \alloclevelssum\attackalloclevelssum \stateprobfndiscris \firststageallocfixedil\attackallocil\right) \secondstagecostvec^T\secondstagevec^\scen \label{eq:obj-multi-attack}}{\label{mod:multi-program-attack}}{}
    \addConstraint{\secondstagematrixfixed\secondstagevec^\scen}{= \secondstagerhsfixedvec}{\quad \forall \scen = 1, \dots, \numscens \label{constr:fixed-constrs-de}}
    \addConstraint{\secondstagevec^\scen}{\leq \secondstagerandvarcoefvec^T\randvec^\scen}{\quad  \forall \scen = 1, \dots, \numscens,\label{constr:var-ub-de}}
\end{maxi!}

The variable products in \eqref{constr:epi-lb} and \eqref{eq:obj-multi-attack} can be linearized using McCormick constraints (\cite{mccormick1976computability}), which form the convex envelope. However, solving formulations \eqref{mod:multi-program} and \eqref{mod:multi-program-attack} is challenging because (i) the number of scenarios $\numscens$ increases exponentially with the number of components $\numcompon$ and (ii) there are many of multilinear variable products in \eqref{constr:epi-lb} and \eqref{eq:obj-multi-attack}, each having a degree of up to $\numcompon$.

\subsubsection{Integer or Continuous Allocation Variables}\label{subsubsec:app-integer-continuous}

When $\firststagevec$ and $\attackvec$ are integer or continuous, then the probability of a realization can be represented as: $
	\condprobfn{\randvec=\tilde{\randvec}}{\firststageallocvec,\attackvec} =\probprod \stateprobfn{\compon}{\tilde{\randsym}_\compon;\firststagevar_\compon,\attackvar_\compon},
$
where $\stateprobfn{\compon}{\tilde{\randsym}_\compon;\firststagevar_\compon,\attackvar_\compon}$ is the probability that $\randvark{\compon}=\tilde{\randsym}_\compon$, given that $\firststagevar_\compon$ defense units and $\attackvar_\compon$ attack units are allocated to component $\compon$. For example, $\randvark{\compon}$ could be a binomial random variable with success probability $\firststagevar_\compon/(\firststagevar_\compon+\attackvar_\compon+\epsilon)$, where $\epsilon$ is a small number to avoid dividing by zero. 

In most cases, $\stateprobfn{\compon}{\tilde{\randsym}_\compon;\firststagevar_\compon,\attackvar_\compon}$ is nonlinear in $\firststagevar_\compon$ and $\attackvar_\compon$. However, the product in $\condprobfn{\randvec=\tilde{\randvec}}{\firststageallocvec,\attackvec}$ can be linearized using modern solvers such as Gurobi. Therefore, Algorithms \ref{alg:sra-callback-attacker} and \ref{alg:sra-callback} can be utilized as callback routines using these modern solvers. 
When the variables $\firststagevec$ and $\attackvec$ are continuous, 
the callback routine is executed when the solution obtained by the relaxed problem 
is feasible to the original one.

\section{Numerical Example}\label{sec:example}

We illustrate using the SRA method to solve a defender-attacker tri-level maximum flow problem on a $3 \times 3$ network shown in Figure \ref{fig:solution-attack-4-3-0}. (\S\ref{sec:results} provides more details about this problem.) In this example, the defender and attacker have two allocation levels; thus, they can allocate 0 or 1 unit to an arc. Both the defender and attacker have a budget of four units. We assume that each component (arc) can be in either of two states and we used the following component state probabilities. 

\begin{equation}\label{state-prob-fn}
    f_k(\componoutcome;\alloclevel,\attackalloclevel) = \begin{cases}
\frac{\alloclevel}{\alloclevel + \attackalloclevel}, & \componoutcome = 1,\alloclevel + \attackalloclevel > 0\\
1- \frac{\alloclevel}{\alloclevel + \attackalloclevel}, & \componoutcome = 0,\alloclevel + \attackalloclevel> 0\\
1, & \componoutcome = 1,\attackalloclevel = 0\\
0, & \componoutcome = 0,\attackalloclevel = 0\\
0, & \componoutcome = 1,\alloclevel = 0, \attackalloclevel > 0\\
1, & \componoutcome = 0,\alloclevel = 0,\attackalloclevel > 0
\end{cases}
\end{equation}

The first two cases imply that the probability of arc failure is computed using a contest success function (\cite{hausken2011defending}). The third and fourth cases imply that an arc that is not attacked never fails. The final two cases imply that an undefended arc always fails if attacked. 

Starting with an unrefined partition $\partition=\{\randvecsupport\}$ and after the presolve phase, the solver finds an incumbent defender solution in which one unit is allocated to arcs $(1,2), (2,3), (8,9)$ (see Figure \ref{fig:solution-attack-4-3-0}). With this defender solution fixed, Algorithm \ref{alg:sra-callback-attacker} is run, identifying an incumbent solution $(1,2), (2,3), (2,5), (7,8)$ (see Figure \ref{fig:solution-attack-4-3-0}). The attacker's partition has not yet been refined, and the lower bound is currently $1.0$. Because the partition has not been refined, both the defender and attacker problems are linear and easy to solve.

Having found an incumbent solution, the next step in Algorithm \ref{alg:sra-callback-attacker} is to check if the current partition should be refined. Using \eqref{eqn:approx-error-cell}, the probability-weighted approximation gap for the only cell in the partition is computed as $2.5$, indicating that the cell should be refined. The cell is refined on arc \((1,2)\) (see Figure \ref{fig:tree-attacker-4-3-1}), which is a contested arc, i.e., an arc in which both the defender and attacker allocate resources. Because of the form of the state probability function \eqref{state-prob-fn}, only contested arcs are branched on during refinement. After this refinement, Algorithm \ref{alg:sra-callback-attacker} continues until it proves that solution \((1,2), (2,3), (2,5), (7,8)\) is optimal, with a final expected recourse objective value of $4.5$. The next step is to add cuts generated using this solution to the defender's main problem \eqref{mod:epi-defender-benders}.

With the cuts added to the main problem, Algorithm \ref{alg:sra-callback} then checks if the defender's partition needs refinement. The probability-weighted approximation gap for the only cell in the partition is $2.5$, so the cell is refined on arc \((1,2)\) (see Figure \ref{fig:tree-attacker-4-3-1}). Note that this refinement is identical to the one for the attacker's problem. After this refinement, Algorithm \ref{alg:sra-callback-attacker} is run, finding an attacker solution with an expected recourse value of $4.5$. After adding cuts based on this attacker solution, the defender's upper bound is $21.52$ and the lower bound is $4.5$, a gap of about 378\%.

After identifying a few more incumbent solutions, the defender refines its partition further, choosing to refine cell 2 along contested arc \((8,9)\) (see Figure \ref{tree-defender-6-2}). Because the attacker's problem is more refined, it has a small number of multilinear terms. This refinement reduces the defender's upper bound to $11.34$, resulting in an optimality gap of about 152\%. 

Figure \ref{fig:final-iteration} shows the final iteration of Algorithm \ref{alg:sra-callback}, starting with the new defender's incumbent solution of \((1,2), (2,3), (7,8), (8,9)\) (see Figure \ref{fig:solution-attack-9-2-0}). Currently, the upper bound is $11.16$, and the lower bound is $6.5$; so the gap has decreased to about 71.8\%. With the defender solution fixed, Algorithm \ref{alg:sra-callback-attacker} is run, finding an incumbent solution \((2,3), (5,6), (7,8), (8,9)\) (see Figure \ref{fig:solution-attack-9-2-0}). The attacker's partition has not yet been refined, and the lower bound is $3.5$. To improve the bound, the attacker refines the partition tree along contested arc \((7,8)\) (see Figure \ref{fig:tree-attacker-9-2-1}). After this refinement, the attacker finds a new incumbent solution \((1,2), (2,3), (7,8), (8,9)\) (see Figure \ref{fig:solution-attack-9-3-1}), which improves the lower bound to $4.65$. The next step is to refine cell 1 in the tree (which has an error of $2.43$) along arc $(1,2)$ (see Figure \ref{fig:tree-attacker-9-3-2}). Algorithm \ref{alg:sra-callback} terminates with defender solution \((1,2), (2,3), (7,8), (8,9)\) and attacker solution \((1,2), (2,3), (5,6), (7,8)\), resulting in an expected max flow of $10.5$. Note that the algorithm terminated after only two refinements to the defender's partition, meaning that the problem only has a small number of multiliear terms. This contrasts with the benchmark approach, which includes all multilinear terms from the outset. Starting with a less nonlinear version of the problem and incrementally introducing nonlinearity through partition refinement proves to be a more manageable strategy, which we will test in the next section.

\begin{figure}
    \centering
    \begin{subfigure}[b]{0.55\textwidth}
        \centering
        \includegraphics[width=\textwidth]{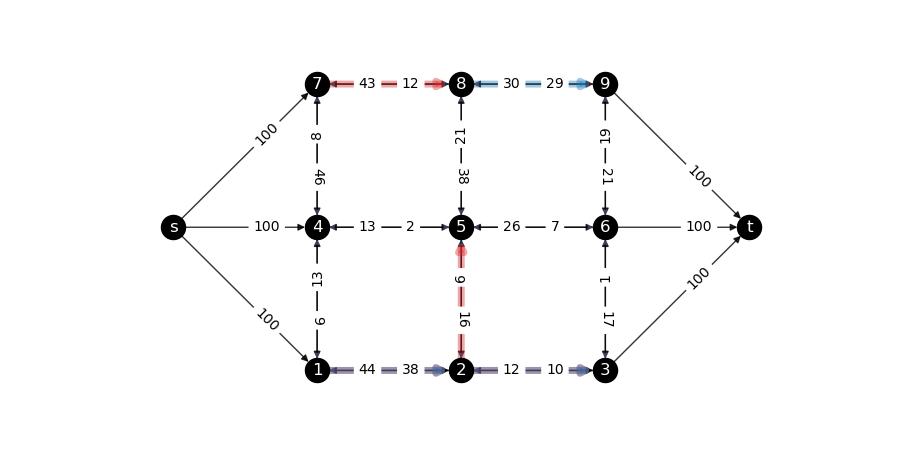}
        \caption{\small defender incumbent and attacker's response.}
        \label{fig:solution-attack-4-3-0}
    \end{subfigure}%
    \begin{subfigure}[b]{0.15\textwidth}
        \centering
        \begin{tikzpicture}
  \draw
    (0, 0) node[draw=black,circle,fill=white] (0){0}
    (-2, -2) node[draw=black,circle,fill=red] (1){1}
    (2, -2) node[draw=black,circle,fill=red] (2){2};
  \begin{scope}[->]
    \draw[line width=1.5] (0) to node[] {$\xi_{12} = 1$} (1);
    \draw[line width=1.5] (0) to node[] {$\xi_{12} = 0$} (2);
  \end{scope}
\end{tikzpicture}
        \caption{Attacker refines partition tree.}
        \label{fig:tree-attacker-4-3-1}
    \end{subfigure}
    \\
    \begin{subfigure}[b]{0.25\textwidth}
        \centering
        \begin{tikzpicture}
  \draw
    (0, 0) node[draw=black,circle,fill=white] (0){0}
    (-2, -2) node[draw=black,circle,fill=red] (1){1}
    (2, -2) node[draw=black,circle,fill=red] (2){2};
  \begin{scope}[->]
    \draw[line width=1.5] (0) to node[] {$\xi_{12} = 1$} (1);
    \draw[line width=1.5] (0) to node[] {$\xi_{12} = 0$} (2);
  \end{scope}
\end{tikzpicture}
        \caption{Defender refines partition tree.}
        \label{tree-defender-4-1}
    \end{subfigure}
    \begin{subfigure}[b]{0.35\textwidth}
        \centering
        \begin{tikzpicture}
      \draw
        (0, 0) node[draw=black,circle,fill=white] (0){0}
        (-2, -2) node[draw=black,circle,fill=red] (1){1}
        (2, -2) node[draw=black,circle,fill=white] (2){2}
        (0, -4) node[draw=black,circle,fill=red] (3){3}
        (4, -4) node[draw=black,circle,fill=red] (4){4};
      \begin{scope}[->]
        \draw[line width=1.5] (0) to node[] {$\xi_{12} = 1$} (1);
        \draw[line width=1.5] (0) to node[] {$\xi_{12} = 0$} (2);
        \draw[line width=1.5] (2) to node[] {$\xi_{89} = 1$} (3);
        \draw[line width=1.5] (2) to node[] {$\xi_{89} = 0$} (4);
      \end{scope}
    \end{tikzpicture}
        \caption{Second refinement of defender's partition tree.}
        \label{tree-defender-6-2}
    \end{subfigure}
    \caption{Iteration of SRA algorithm.}
\end{figure}

\begin{figure}
    \centering
    \begin{subfigure}[b]{0.55\textwidth}
        \centering
        \includegraphics[width=\textwidth]{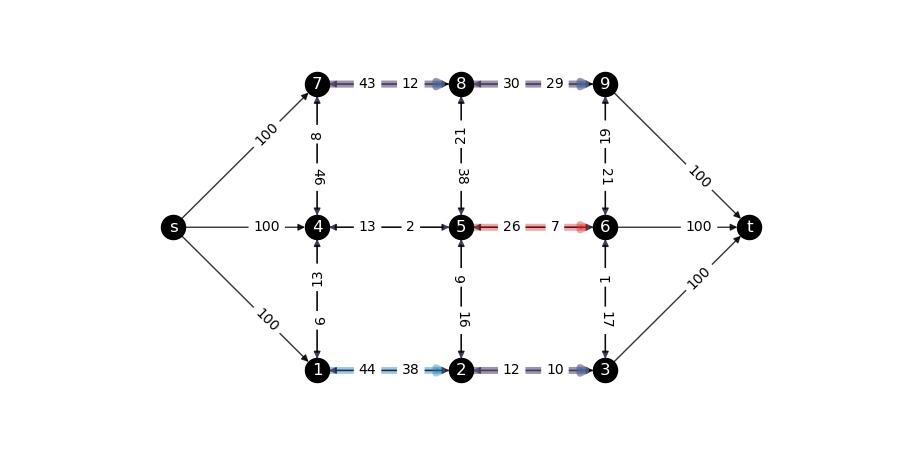}
        \caption{Defender incumbent and attacker's response.}
        \label{fig:solution-attack-9-2-0}
    \end{subfigure}%
    \begin{subfigure}[b]{0.4\textwidth}
        \centering
        \begin{tikzpicture}
  \draw
    (0, 0) node[draw=black,circle,fill=white] (0){0}
    (-2, -2) node[draw=black,circle,fill=red] (1){1}
    (2, -2) node[draw=black,circle,fill=red] (2){2};
  \begin{scope}[->]
    \draw[line width=1.5] (0) to node[] {$\xi_{78} = 1$} (1);
    \draw[line width=1.5] (0) to node[] {$\xi_{78} = 0$} (2);
  \end{scope}
\end{tikzpicture}
        \caption{Attacker refines partition tree.}
        \label{fig:tree-attacker-9-2-1}
    \end{subfigure}
    \\
    \begin{subfigure}[b]{0.5\textwidth}
        \centering
        \includegraphics[width=\textwidth]{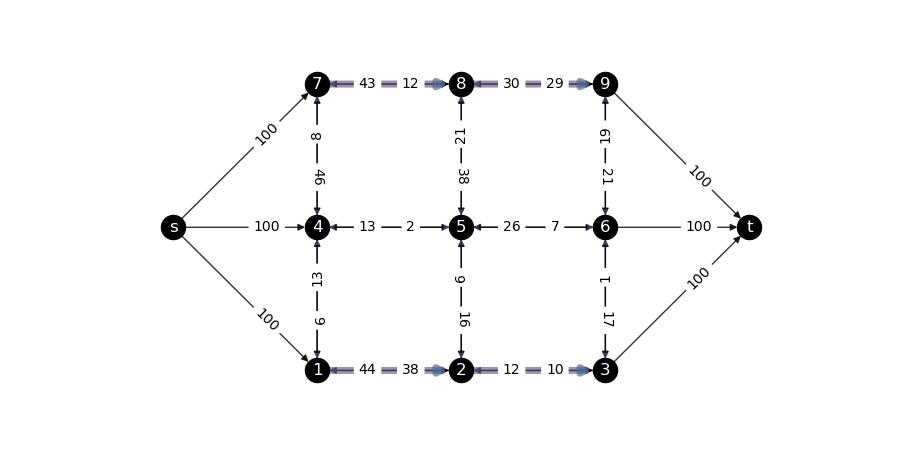}
        \caption{New incumbent solution for attacker.}
        \label{fig:solution-attack-9-3-1}
    \end{subfigure}
    \begin{subfigure}[b]{0.25\textwidth}
        \centering
        \scalebox{0.75}{\begin{tikzpicture}
  \draw
    (0, 0) node[draw=black,circle,fill=white] (0){0}
    (-2, -2) node[draw=black,circle,fill=white] (1){1}
    (-4, -4) node[draw=black,circle,fill=red] (3){3}
    (0, -4) node[draw=black,circle,fill=red] (4){4}
    (2, -2) node[draw=black,circle,fill=red] (2){2};
  \begin{scope}[->]
    \draw[line width=1.5] (0) to node[] {$\xi_{78} = 1$} (1);
    \draw[line width=1.5] (0) to node[] {$\xi_{78} = 0$} (2);
    \draw[line width=1.5] (1) to node[] {$\xi_{12} = 1$} (3);
    \draw[line width=1.5] (1) to node[] {$\xi_{12} = 0$} (4);
  \end{scope}
\end{tikzpicture}}
        \caption{Attacker refines partition tree again.}
        \label{fig:tree-attacker-9-3-2}
    \end{subfigure}
    \begin{subfigure}[b]{\textwidth}
        \centering
        \includegraphics[width=0.5\textwidth]{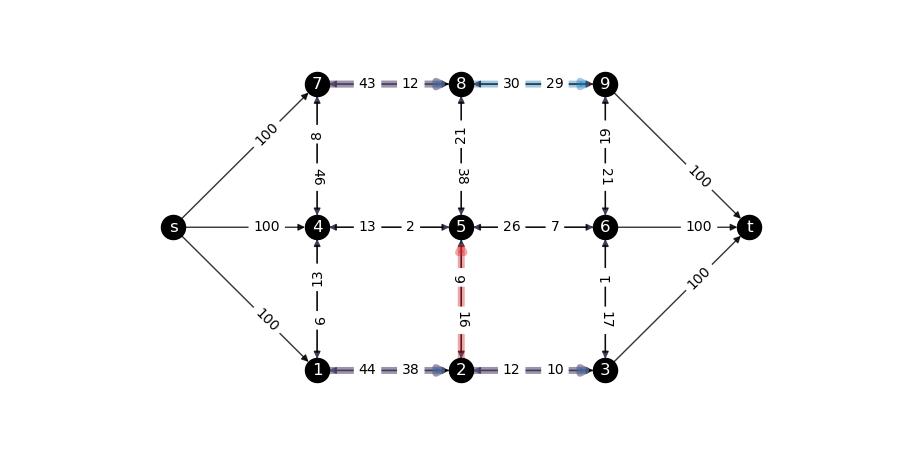}
        \caption{Final solution (expected maximum flow equals 10.5).}
        \label{solution-9-2}
    \end{subfigure}
    \caption{Final iteration of algorithm.}
    \label{fig:final-iteration}
\end{figure}

\section{Computational Results}\label{sec:results}
The SRA framework works well when the upper and lower bounds become sufficiently close before too many refinements are made to the partition. In this section we test this empirically. Specifically, we evaluate the computational efficiency of the SRA algorithm and compare it with a benchmark method, presenting results for both methods on the same expected maximum flow problem considered in \S\ref{sec:example}.

\subsection{Test Problem: Stochastic Maximum Flow Interdiction Problem with Hardening}\label{sec:test-prob}

We evaluate our approach using the defender-attacker problem with the maximum network flow problem as the recourse function. This problem involves three players: an attacker, a defender, and an operator. The operator aims to maximize the flow across a network $G = (N, A)$ from a source node $s$ to a target node $t$ under various failure scenarios. Let $\capac[k]$ represent the capacity of arc $k$, and let $\randvec$ be a random vector where $\randvark{k} = 1$ if arc $k$ is available and $0$ otherwise. The variable $y_k$ denotes the flow in arc $k = (i, j)$, and $y_{ts}$ represents the flow on the auxiliary arc $(t, s)$. Let $A^+_i = \{k = (i, j) \mid (i, j) \in A\}$ be the forward star set for node $i$, and $A^-_i = \{k = (j, i) \mid (j, i) \in A\}$ be the reverse star set. In this test problem we assumed that arcs emanating from the source or flowing into the sink cannot fail (to avoid easily disconnecting the network) and set the capacity of these arcs (as well as the reverse arc $(t,s)$) to a large value. The recourse function $\recfn(\randvec)$ represents the maximum flow given arc availability vector $\randvec$: 

\begin{maxi!}|s|
    {}{\secondstagevar_{ts} \label{eq:max-flow-obj}}{\label{mod:max-flow-recourse}}{\recfn(\randvec) =}
    \addConstraint{\sum_{k\in A^+_i} \secondstagevar_k - \sum_{k\in A^-_i} \secondstagevar_k}{= \delta(i)}{\quad \forall i\in N\label{flow-conserve}}
    \addConstraint{0 \leq \secondstagevar_\compon}{\leq \capac[k]\randvec_k}{\quad \forall k\in A, \label{max-flow-cap}}
\end{maxi!}

\noindent where $\delta(i)$ is an expression that equals $-y_{ts}$ for $i=s$, $y_{ts}$ for $i=t$, and 0 for all other $i\in N\setminus\{s,t\}$.

The defender has a budget $b^D$, with the feasible region defined as $\firststageallocfeasregion = \{\firststagevec \mid \sum_{k \in \arcs}\alloclevelssum \alloclevel \firststageallocil \leq \numalloclevels b^D,\alloclevelssum \firststageallocil \leq 1$ for all $\compon=1,\dots,\numcompon\}$.
The attacker’s decisions are constrained by a budget $b^A$, and the feasible region is $\attackfeasregion = \{\attackvec \mid \sum_{k \in \arcs}\attackalloclevelssum \attackalloclevel\attackallocil \leq \numattackalloclevels b^A,\alloclevelssum \attackallocil \leq 1$ for all $\compon=1,\dots,\numcompon\}$. We used \eqref{state-prob-fn} as the state probability function $f_k(\componoutcome;\alloclevel,\attackalloclevel)$.



For a given random vector $\randvec$, let $1(\randvec)$ be the set of arcs that are available and $0(\randvec)$ be the set of arcs that are not available. Then the probability that arc availability vector $\randvec$ is realized is:

\begin{equation}
    \probability[\randvec |\firststagevec, \attackvec] = \prod_{k \in 1(\randvec)} \alloclevelssum\attackalloclevelssum f_k(1; \alloclevel, \attackalloclevel) \firststagevar_{k, \alloclevel} \attackvar_{k, \attackalloclevel} \prod_{k \in 0(\randvec)} \alloclevelssum\attackalloclevelssum f_{k}(0; \alloclevel, \attackalloclevel) \firststagevar_{k, \alloclevel} \attackvar_{k, \attackalloclevel}
\end{equation}


The following convex equivalent recourse function, which is convex on $[0,1]^A$, exists for this problem \citep{Cormican.1998}. Note that because the attacker's problem is maximization, we need the equivalent recourse function to be \textit{convex}.
 
\begin{maxi!}|s|
    {}{\secondstagevar_{ts} - \sum_{k 
    \in \arcs}(1 - \randvark{k}) \secondstagevar_{k} \label{eq:pen-based-max-flow-obj}}{\label{mod:pen-based-max-flow-recourse}}{\recfnpenalty(\randvec) =}
    \addConstraint{\sum_{k\in A^+_i} \secondstagevar_k - \sum_{k\in A^-_i} \secondstagevar_k}{= \delta(i)}{\quad \forall i\in N\label{pen-based-flow-conserve}}
    \addConstraint{0 \leq \secondstagevar_\compon}{\leq \capac[k]}{\quad \forall k\in A, \label{pen-based-max-flow-cap}}
\end{maxi!}

For a fixed $\hat{\attackvec}$, the expected value for the random vector $\randvec$ is $\randvecevfn{\firststagevecfix,\attackvecfix} = \alloclevelssum\attackalloclevelssum f_k(1;\alloclevel, \attackalloclevel)\firststagevar_{k,\attackalloclevel} \hat{\attackvar}_{k,\alloclevel}$. Thus, the convex equivalent recourse problem for a given cell $\cell$ is given as:

\begin{maxi!}|s|
    {}{\secondstagevar_{ts} - \sum_{k 
    \in \arcs}\left(1 - \alloclevelssum\attackalloclevelssum f_k(1;\alloclevel, \attackalloclevel)\firststagevar_{k,\attackalloclevel} \hat{\attackvar}_{k,\alloclevel} \right) \secondstagevar_{k} \label{eq:mean-pen-based-max-flow-obj}}{\label{mod:mean-pen-based-max-flow-recourse}}{\recfnpenalty(\randvecevcellfnindex{\firststagevec,\attackvecfix}) = }
    \addConstraint{\sum_{k\in A^+_i} \secondstagevar_k - \sum_{k\in A^-_i} \secondstagevar_k}{= \delta(i)}{\quad \forall i\in N\label{mean-pen-based-flow-conserve}}
    \addConstraint{0 \leq \secondstagevar_\compon}{\leq \capac[k]}{\quad \forall k\in A, \label{mean-pen-based-max-flow-cap}}
\end{maxi!}

Consider a leaf node $\leaf$ in a partially refined partition $\partition$ where arcs in the subset $\fixedcomponents$ have fixed states (i.e., are either in $1(\leaf)$ and $0(\leaf)$) and $\components \setminus \fixedcomponents$ represents the subset of arcs that are unfixed and assume their expected state.  Let $\probfnvar^{\leaf',\leaf} [\firststagevecfix, \attackvec] =\prod_{k \in 1(\treenode')} \alloclevelssum\attackalloclevelssum f_k(1; \alloclevel, \attackalloclevel) \hat{\firststagevar}_{k, \alloclevel} \attackvar_{k, \attackalloclevel} \prod_{k \in 0(\treenode')} \alloclevelssum\attackalloclevelssum f_k(0; \alloclevel, \attackalloclevel) \hat{\firststagevar}_{k, \alloclevel} \attackvar_{k, \attackalloclevel}$. For a fixed defense (hardening) plan $\hat{\firststagevec}$, as explained in \S\ref{sec:app-binary} the attacker's problem is formulated as:

\begin{mini!}|s|
    {\attackvec \in \attackfeasregion}{\auxvar_0 \label{eq:max-prob-epi-obj}}{\label{mod:max-prob-epi}}{}
    \addConstraint{\sum_{\treenode'\in \descendentnodes(\treenode;\partition)} \Bigl(\probfnvar^{\leaf',\leaf} [\firststagevecfix, \attackvec]\Bigl) \auxvar_{\treenode'}}{\geq \auxvar_\treenode}{\quad \forall \treenode\in \parttreenodes(\partition)\setminus \parttreeleafnodes(\partition)\label{eqn:conditional}}
    \addConstraint{\sum_{k \in 1(\treenode)} \capac[k]\capdualvar_k^\leaf}{\geq \auxvar_{\treenode}}{\quad \forall \treenode \in \parttreenodes(\partition) \setminus \parttreeleafnodes(\partition)}
    \addConstraint{\constrdualvar_i^\leaf - \constrdualvar_j^\leaf + \capdualvar_k^\leaf + ( 1 - \alloclevelssum\attackalloclevelssum f_k(0;\alloclevel,\attackalloclevel) \hat{\firststagevar}_{k\alloclevel}\attackvar_{k\attackalloclevel})}{\geq 0}{\quad \forall k \in \components \setminus \fixedcomponents(\leaf), \leaf \in \parttreeleafnodes(\partition)}
    \addConstraint{\constrdualvar_{i}^{\leaf} - \constrdualvar_{j}^{\leaf} + \capdualvar_k^{\leaf} + 1}{\geq 0}{\quad \forall \compon=(i,j)\in 0(\leaf),\leaf \in \parttreeleafnodes(\partition)}
    \addConstraint{\constrdualvar_{i}^{\leaf} - \constrdualvar_{j}^{\leaf} + \capdualvar_k^{\leaf} }{\geq 0}{\quad \forall \compon=(i,j)\in 1(\leaf),\leaf \in \parttreeleafnodes(\partition)}
    \addConstraint{\constrdualvar_{s}^{\leaf}}{=}{0,\quad\leaf \in \parttreeleafnodes(\partition)}
    \addConstraint{\constrdualvar_{t}^{\leaf}}{=}{1,\quad\leaf \in \parttreeleafnodes(\partition)}
\end{mini!}

\noindent where $\constrdualvec$  and $\constrdualubvec$ are the dual vectors of constraints \eqref{pen-based-flow-conserve} and \eqref{pen-based-max-flow-cap}. Similarly, the defender's problem is formulated using formulation \eqref{mod:epi-defender-benders}:

\begin{mini!}|s|
    {\firststagevec\in\firststagefeasregion}{\auxvar_0 \label{eq:epi-obj-defender-benders-example}}{\label{mod:epi-defender-benders-example}}{}
    \addConstraint{\auxvar_0}{\geq \auxvar_{0,\attackplan}}{\quad\attackplansinlist\label{eqn:conditional-epigraph-root-benders-example}}
    \addConstraint{\auxvar_{\treenode,\attackplan}}{\geq \sum_{\treenode'\in \descendentnodes(\treenode;\partition)} \condtreeprob{\firststagevec,\attackvec^\attackplan}\,\auxvar_{\treenode',\attackplan}}{\quad \forall \treenode\in \parttreenodes(\partition)\setminus \parttreeleafnodes(\partition),\attackplansinlist\label{eqn:conditional-epigraph-benders-example}}
    \addConstraint{\auxvar_{\leaf,\attackplan}}{\geq \sum_{k \in 1(\treenode)} \capac[k]\hat{\capdualvar}_k^{\leaf\attackplan}}{\quad\forall \leaf\in \parttreeleafnodes(\partition),\attackplansinlist\label{eqn:leaf-cut-benders-example}}
\end{mini!}

\subsection{Results}
We compared the SRA mthod with the deterministic equivalent formulation (DEF) benchmark from \S\ref{sec:app-binary}. Experiments were conducted on grid networks of varying sizes (see Figure (\ref{fig:solution-attack-4-3-0} for example), with equal number of rows and columns. These networks include bidirectional arcs where the source node connects to all nodes in the leftmost column, and the target node connects to all nodes in the rightmost column. We vary the defender's and attacker's budgets, as well as the number of allocation levels, creating unique experimental instances defined by grid size (rows × columns), attacker/defender budget ($b^A=b^D$), and the number of allocation levels for both the defender and attacker ($\numalloclevels=\numattackalloclevels$). The maximum budget available to the defender and attacker is calculated as the product of the budget and allocation i.e., the defender budget is $b^{D}\numalloclevels$ and the attacker's $b^{A}\numattackalloclevels$. The grid sizes considered in our experiments range from $3\times3$ (9 nodes) to $9\times9$ (81 nodes), and the budgets and allocation levels range between 2 and 4. Because Gurobi does not allow adding variables during the branch-and-cut algorithm, we added all necessary variables at the beginning. We set a time limit of $1800s$ for $3 \times 3$ networks, $3600s$ for  $4 \times 4$ networks, and $7200s$ for others.

We conducted five randomly generated sample instances for each problem instance. Table \ref{tab:results1} presents the results obtained using both the DEF and the SRA approaches. The table lists the runtimes when the model solved within the time limit and the relative optimality gap otherwise. Additionally, we tracked the number of instances (out of five) that solved within the time limit. For the SRA method, we also recorded the number of average refinements required in Algorithm \ref{alg:sra-callback}.

As shown in Table \ref{tab:results1}, the DEF's effectiveness is limited to smaller problems with a budget of 2. This limitation is due to the high number of arcs, which results in a large number of multilinear terms with high degree in constraints \eqref{constr:epi-lb} and \eqref{eq:obj-multi-attack}. In contrast, the SRA method achieved optimal solutions for most instances, up to $9 \times 9$ grid networks with 81 nodes, particularly when the budget was either 2 or 3 units, and there were 2 or 3 allocation levels. The SRA method performed between 3.6 and 67 times faster for instances where the DEF method could solve. 

The findings consistently show that as the network size, budget, or allocation levels increase, the SRA algorithm requires more computation time to reach solutions. The increased number of arcs in larger networks adds to the problem's complexity and nonlinearity. This increased complexity results in a deeper refinement tree for SRA, necessitating more extensive refinement efforts to close the optimality gap. Higher budgets further intensify this effect, deviating more from the optimal objective of the mean value problem and requiring additional refinements to reduce the gap.


\begin{longtable}{c|c|c|cc|ccc}
\caption{Results}\label{tab:results1}\\
\hline
\multirow{2}{*}{\textbf{rows $\times$ columns}} &
  \multirow{2}{*}{\textbf{$b^{A} = b^{D}$}} &
  \multirow{2}{*}{\textbf{$L = L'$}} &
  \multicolumn{2}{c|}{\textbf{DEF}} &
  \multicolumn{3}{c}{\textbf{SRA}} \\ \cline{4-8} 
  & & & \multicolumn{1}{c}{\textbf{Runtime}} & \multicolumn{1}{c|}{\textbf{Solved}} & \multicolumn{1}{c}{\textbf{Runtime}} & \multicolumn{1}{c}{\textbf{No of Refinements}} & \multicolumn{1}{c}{\textbf{Solved}} \\ \hline
\endhead
\hline
\endfoot
\endlastfoot
3  & 2 & 2      & 11.7   & 5       & 3.2    & 3.6  & 5 \\
3  & 2 & 3      & 193.3  & 5       & 14.8   & 7.4  & 5 \\
3  & 2 & 4      &        & 0       & 64.3   & 16.4 & 5 \\
3  & 3 & 2      &        & 0       & 14.0   & 13.0 & 5 \\
3  & 3 & 3      &        & 0       & 85.9   & 28.8 & 5 \\
3  & 3 & 4      &        & 0       & 253.0  & 26.2 & 5 \\
3  & 4 & 2      &        & 0       & 65.3   & 31.0 & 5 \\
3  & 4 & 3      &        & 0       & 218.0  & 45.8 & 5 \\
3  & 4 & 4      &        & 0       & 706.2  & 36.5 & 2 \\ \hline
4  & 2 & 2      & 65.2   & 5       & 4.7    & 1.2  & 5 \\
4  & 2 & 3      & 3461.9 & 1       & 59.3   & 12.0 & 5 \\
4  & 2 & 4      &        & 0       & 597.4  & 32.2 & 5 \\
4  & 3 & 2      &        & 0       & 39.6   & 17.2 & 5 \\
4  & 3 & 3      &        & 0       & 1700.4 & 55.8 & 5 \\
4  & 3 & 4      &        & 0       & 4.7\%  & 46.0 & 0 \\
4  & 4 & 2      &        & 0       & 322.1  & 59.0 & 5 \\
4  & 4 & 3      &        & 0       & 5.9\%  & 76.0 & 0 \\
4  & 4 & 4      &        & 0       &        & 11.2 & 0 \\ \hline
5  & 2 & 2      & 119.5  & 5       & 7.0    & 0.0  & 5 \\
5  & 2 & 3      &        & 0       & 68.4   & 4.6  & 5 \\
5  & 2 & 4      &        & 0       & 1068.8 & 27.8 & 5 \\
5  & 3 & 2      &        & 0       & 42.0   & 5.8  & 5 \\
5  & 3 & 3      &        & 0       & 3439.1 & 50.2 & 5 \\
5  & 3 & 4      &        & 0       & 12.6\% & 23.8 & 0 \\
5  & 4 & 2      &        & 0       & 474.0  & 39.4 & 5 \\
5  & 4 & 3      &        & 0       &        & 21.4 & 0 \\
5  & 4 & 4      &        & 0       &        & 5.6  & 0 \\ \hline
6  & 2 & 2      & 163.9  & 5       & 6.6    & 0.0  & 5 \\
6  & 2 & 3      &        & 0       & 56.5   & 0.6  & 5 \\
6  & 2 & 4      &        & 0       & 1175.5 & 9.8  & 5 \\
6  & 3 & 2      &        & 0       & 42.1   & 1.8  & 5 \\
6  & 3 & 3      &        & 0       & 4553.2 & 38.0 & 4 \\
6  & 3 & 4      &        & 0       & 14.7\% & 13.0 & 0 \\
6  & 4 & 2      &        & 0       & 453.1  & 20.4 & 5 \\
6  & 4 & 3      &        & 0       & 18.3\% & 15.4 & 0 \\
6  & 4 & 4      &        & 0       &        & 0.4  & 0 \\ \hline
7  & 2 & 2      & 353.0  & 5       & 10.6   & 0.2  & 5 \\
7  & 2 & 3      &        & 0       & 70.1   & 0.0  & 5 \\
7  & 2 & 4      &        & 0       & 602.9  & 2.6  & 5 \\
7  & 3 & 2      &        & 0       & 40.8   & 0.0  & 5 \\
7  & 3 & 3      &        & 0       & 2064.4 & 13.8 & 5 \\
7  & 3 & 4      &        & 0       &        & 3.2  & 0 \\
7  & 4 & 2      &        & 0       & 333.1  & 6.8  & 5 \\
7  & 4 & 3      &        & 0       &        & 8.0  & 0 \\
7  & 4 & 4      &        & 0       &        & 0.8  & 0 \\ \hline
8  & 2 & 2      & 662.7  & 5       & 17.1   & 0.0  & 5 \\
8  & 2 & 3      &        & 0       & 52.8   & 0.0  & 5 \\
8  & 2 & 4      &        & 0       & 454.3  & 0.4  & 5 \\
8  & 3 & 2      &        & 0       & 36.8   & 0.0  & 5 \\
8  & 3 & 3      &        & 0       & 1779.1 & 4.2  & 5 \\
8  & 3 & 4      &        & 0       &        & 4.2  & 0 \\
8  & 4 & 2      &        & 0       & 189.0  & 0.4  & 5 \\
8  & 4 & 3      &        & 0       &        & 4.4  & 0 \\
8  & 4 & 4      &        & 0       &        & 0.0  & 0 \\ \hline
9  & 2 & 2      & 1238.9 & 5       & 18.5   & 0.2  & 5 \\
9  & 2 & 3      &        & 0       & 63.5   & 0.2  & 5 \\
9  & 2 & 4      &        & 0       & 648.6  & 0.8  & 5 \\
9  & 3 & 2      &        & 0       & 48.5   & 0.2  & 5 \\
9  & 3 & 3      &        & 0       &        &      & 0 \\
9  & 3 & 4      &        & 0       &        & 5.0  & 0 \\
9  & 4 & 2      &        & 0       & 324.8  & 0.8  & 5 \\
9  & 4 & 3      &        & 0       &        & 4.2  & 0 \\
9  & 4 & 4      &        & 0       &        & 0.0  & 0 \\ \hline
\end{longtable}

\section{Conclusion}\label{sec:conclusion}

This study presented an approach for solving tri-level stochastic defender-attacker problems with decision-dependent probability distributions. This class of problems addresses both imperfect and probabilistic hardening and interdiction in which the probability distribution of the capacity of a system component depends on the defender's and attacker's allocation. The large amount of high-degree multilinear terms in the deterministic equivalent formulation make solving these problems particularly challenging. 

To address these challenges, we developed a successive refinement framework for tri-level defender-attacker problems, leveraging the lower and upper-bound properties inherent in these problems. We then demonstrated how to apply this framework to a specific class of tri-level stochastic defender-attacker problems with decision-dependent capacity distributions. 

We then tested the algorithm on a set of instances of the tri-level defender-attacker maximum flow problem, comparing the successive refinement framework's performance with the deterministic equivalent formulation (DEF). For each parameter setting, we ran both models for five randomly generated instances. The Gurobi solver solved the DEF formulation for all five instances for only the smallest problem instances tested. For the parameter settings in which the DEF method was able to solve any instances, the SRA method was between $3.6$ and $67$ times faster. Also, the SRA framework was able to solve instances consisting of $9\times 9$ grid networks with 81 nodes with up to four allocation levels and a budget of eight units for both the defender and attacker. The findings indicate that the SRA approach scales well with increasing network size and complexity, obtaining solutions in most cases under various attacker and defender budget and allocation scenarios. However, the SRA method did reach its limits on the largest instances tested. It was not able to solve all five instances for any of the networks for the largest budget size (16 allocation units).

In applying the successive refinement framework to tri-level interdiction problems with hardening, it was assumed that the random variables governing uncertain capacity distributions were mutually independent and discrete with finite support. Thus, future work could examine how to apply the framework to problems with correlated random variables, if a strong motivating application exists. Future work could also examine how to apply the framework to problems with an infinite support, either discrete or continuous.

\bibliographystyle{plainnat}  
\bibliography{BiTri,ddu}  

\newpage
\appendix

\section{Non-Callback Algorithms for Attacker's Problem and Defender-Attacker Problem}

\begin{algorithm}
	{\sc SuccessiveRefinementAlgorithm}($\firststagevecfix,\epsilon$)
	\begin{algorithmic}[1]
        \STATE $\partition\gets\{\randvecsupport\}$, $K = 0$, $e^{max}\gets +\infty$.
		\WHILE{$e^{max} > \epsilon$}
        \STATE Solve \eqref{mod:attacker} for partition $\partition$, returning solution $\attackvec$.
        \STATE Let $e^{max} = \max_{\partitioncell\in \partition} \{\textsc{Error}(\partitioncell,\firststagevecfix,\attackvec)\}$ and $\partitioncellvar^{max}\in \arg\max_{\partitioncell\in \partition} \{\textsc{Error}(\partitioncell,\firststagevecfix,\attackvec)\}$ (see \eqref{eqn:approx-error-cell}).\label{line:compute-error}
        \IF{$e^{max} > \epsilon$}
        \STATE $\partition\gets \textsc{Refine}(\partition,\firststagevecfix,\attackvec,\partitioncellvar^{max})$ (see \S\ref{sec:refine-cell}). \label{line:refine}
        \STATE Modify formulation \eqref{mod:attacker} according to new partition $\partition$.
        \ENDIF
		\ENDWHILE
		\RETURN $UB$, $x^*$
	\end{algorithmic}
	\caption{Successive refinement algorithm for attacker's problem without callback.}
	\label{alg:sra-attacker}
\end{algorithm}

\begin{algorithm}
	{\sc SuccessiveRefinementAlgorithm}($\epsilon$)
	\begin{algorithmic}[1]
        \STATE $\partition\gets\{\randvecsupport\}$, $K = 0$, $UB\gets +\infty$.
		\WHILE{$UB - LB > \epsilon\, LB$}
        \STATE Solve \eqref{mod:defender-epi-resticted} for partition $\partition$, returning solution $(\firststagevecfix,\auxvar)$. Set $LB=\auxvar$.
        \STATE With $\firststagevecfix$ fixed, solve \eqref{mod:attacker} using Algorithm \ref{alg:sra-attacker}, returning attacker's best response $\attackvec^*$ and objective value $\attackobjvar^*(\firststagevecfix)$. Set $UB = \attackobjvar^*(\firststagevecfix)$, $K=K+1$, and $\attackvec^K = \attackvec^*$.
        \STATE Add to \eqref{mod:defender-epi-resticted} a new cut of the form \eqref{eqn:attack-cut-epi-restricted}.
        \STATE Let $e^{max} = \max_{\partitioncell\in \partition} \{\textsc{Error}(\partitioncell,\firststagevecfix,\attackvec^*)\}$ and $\partitioncellvar^{max}\in \arg\max_{\partitioncell\in \partition} \{\textsc{Error}(\partitioncell,\firststagevecfix,\attackvec^*)\}$ (see \eqref{eqn:approx-error-cell}).
        \IF{$e^{max} > \epsilon$}
        \STATE $\partition\gets \textsc{Refine}(\partition,\firststagevecfix,\attackvec^*,\partitioncellvar^{max})$ (see \S\ref{sec:refine-cell}).
        \STATE Modify formulation \eqref{mod:defender-epi-resticted} according to new partition $\partition$.
        \ENDIF
		\ENDWHILE
		\RETURN $UB$, $x^*$
	\end{algorithmic}
	\caption{Successive refinement algorithm for defender-attacker without callback.}
	\label{alg:sra-da}
\end{algorithm}


\end{document}